\newcommand{\scong}{\cong_s}
\newcommand{\id}{\text{id}}
\newcommand{\ZZ}{\mathbb{Z}}
\newcommand{\NN}{\mathbb{N}}
\newcommand{\MM}{\mathbb{M}}
\newcommand{\zot}{\mathcal{Z}}
\newcommand\restr[2]{{
  \left.\kern-\nulldelimiterspace 
  #1 
  \right|_{#2} 
  }}
\newcommand{\shift}{\nu}
\newcommand{\shiftt}{\mu}
\newtheorem{proposition}{Proposition}
\newtheorem{theorem}{Theorem}
\newtheorem{corollary}{Corollary}
\theoremstyle{definition}
\theoremstyle{remark}
\newtheorem{remark}{Remark}
\theoremstyle{question}
\newtheorem{question}{Question}
\title{On the Conjugacy Problem of Cellular Automata\thanks{Research supported by the Academy of Finland Grant 296018.}}
\author{Jarkko Kari\\
	University of Turku\\
	20500 Turku, Finland
	\and
	Joonatan Jalonen\thanks{Author's research supported by the Finnish Cultural Foundation.}\thanks{email: jsjalo@utu.fi}\\
	University of Turku\\
	20500 Turku, Finland
}
\begin{document}

\maketitle

\begin{abstract}
Cellular automata are topological dynamical systems. We consider the problem of deciding whether two cellular automata are conjugate or not. We also consider deciding strong conjugacy, that is, conjugacy by a map that commutes with the shift maps. We show that the following two sets of pairs of one-dimensional one-sided cellular automata are recursively inseparable:
\begin{enumerate}[(i)]
\item pairs where the first cellular automaton has strictly higher entropy than the second one, and
\item pairs that are strongly conjugate and both have zero topological entropies.
\end{enumerate}
This implies that the following decision problems are undecidable: Given two one-dimensional one-sided cellular automata $F$ and $G$: Are $F$ and $G$ conjugate? Is $F$ a factor of $G$? Is $F$ a subsystem of $G$? All of these are undecidable in both strong and weak variants (whether the homomorphism is required to commute with the shift or not, respectively).

We also prove the same results for reversible two-dimensional  cellular automata.

\end{abstract}

\section{Introduction}
Cellular automata were designed to be a model for natural computing. A cellular automaton consists of simple devices, namely finite state automata, on an integer lattice. The finite state automata update their states synchronously depending only on the states of the automata in some finite neighborhood. The computational complexity of cellular automata arises from parallelism. In \cite{Kari94} it was proved that injectivity (which for cellular automata is equivalent to bijectivity) and surjectivity of two-dimensional cellular automata is undecidable, while the same problems are decidable for one-dimensional cellular automata \cite{Amoroso72}. As a more recent development, in \cite{DennunzioFormentiWeiss14} a multidimensional version of the closing property of cellular automata was defined and proven undecidable; also this property is known to be decidable for one-dimensional cellular automata \cite{Kurka03}.

The Curtis-Lyndon-Hedlund Theorem \cite{Hedlund69} says that cellular automata can equivalently be defined as the shift commuting endomorphisms of a full shift. This leads to the fruitful study of cellular automata as topological dynamical systems. Some dynamical properties are known to be undecidable for cellular automata, for example equicontinuity (can be seen by combining results from \cite{Kari92} and \cite{Kurka97} as shown in \cite{DurandFormentiVarouchas03}, and remains undecidable even among reversible cellular automata \cite{KariOllinger08}), left expansivity \cite{KariLukkarila09}, and mixingness \cite{Lukkarila10}.

In this paper we focus on the conjugacy problem of cellular automata, i.e. the problem of deciding whether two cellular automata are conjugate or not. In \cite{Epperlein17} it was proved that conjugacy of one-dimensional periodic cellular automata is decidable. They also conjectured that conjugacy for general one-dimensional cellular automata is undecidable. This article is an extended version of the conference paper \cite{JalonenKari18}, in which this conjecture was proven. This proof is presented in Section \ref{One-dimensional case}, where we prove that given a pair of one-dimensional one-sided cellular automata, it is recursively inseparable whether the first cellular automaton has strictly larger topological entropy than the second one, or whether the two are strongly conjugate and have zero topological entropies. This implies that (strong) conjugacy, being a (strong) factor, and being a (strong) subsystem are undecidable properties for one-dimensional one- and two-sided cellular automata. Here ``strong'' means that the corresponding homomorphism is shift commuting. The essential tool of our proof is the undecidability of nilpotency of one-dimensional one-sided cellular automata \cite{Kari92}.

One of the interesting questions left open is whether (strong) conjugacy remains undecidable when restricted to reversible one-dimensional cellular automata. Reversible cellular automata are those that are bijective and whose inverse is also a cellular automaton; it turns out that for cellular automata reversibility and bijectivity are equivalent \cite{Hedlund69}. In Section \ref{Two-dimensional case} we prove the same inseparability result for reversible two-dimensional two-sided cellular automata as in Section \ref{One-dimensional case} for one-dimensional one-sided cellular automata. This again implies that (strong) conjugacy, being a (strong) factor, and being a (strong) subsystem, are undecidable properties. The undecidability of conjugacy of two-dimensional cellular automata was already proved in \cite{Epperlein17}. In our proof the important ingredients come from \cite{Kari94} and \cite{Meyerovitch08}. In \cite{Kari94}  a special kind of tile set was constructed to show that reversibility of two-dimensional cellular automata is undecidable. Using this tile set it was proved in \cite{Meyerovitch08} that there are two-dimensional cellular automata that have strictly positive finite entorpies. Our construction also gives a positive answer to \cite[Question 6.1.]{Meyerovitch08} which asks whether there exist reversible two-dimensional cellular automata with strictly positive finite entropies.

\section{Preliminaries}

\label{Preliminaries}
\subsection{Symbolic dynamics}

Zero is considered a natural number, i.e. $0\in\mathbb{N}$. For two integers $i,j\in\mathbb{Z}$ such that $i < j$ the integer interval from $i$ to $j$ is denoted $[i,j]=\{i,i+1,\dots,j\}$, we also denote $[i,j)=\{i,i+1,\dots j-1\}$ and $(i,j]=\{i+1,\dots,j\}$. 
Composition of functions $f:X\rightarrow Y$ and $g:Y\rightarrow Z$ is written as $gf$, and defined by $(gf)(x)=g(f(x))$ for all $x\in X$. The set of all functions $X\rightarrow Y$ is denoted by $Y^X$. We use the notation $\MM$ when it does not matter whether $\ZZ$ or $\NN$ is used.

Let $A$ be a finite set called the \emph{alphabet} or the \emph{state set}. An element $c\in A^{\MM^d}$ is a \emph{configuration}. Configurations are maps that assign letters, or states, to cells of an integer lattice. We denote $c(u)=c_u$ for $u\in\MM^d$. For any subset $D\subseteq \MM^d$ we denote by $c_D$ the restriction of $c$ to the domain $D$.  Let $D\subset\MM^d$ be finite and $p\in A^D$. The set $[p]=\{c\in A^\mathbb{M^d}\mid c_D=p\}$ is called a \emph{cylinder set}.
Let $\lVert u\rVert=\max\{\lvert u_1\rvert,\dots,\lvert u_d\rvert\}$ for all $u\in\MM^d$. We consider $A^{\MM^d}$ to be a metric space with the metric
\[
\delta(c,e) =
\begin{cases}
 2^{-\min\left(\{\lVert u\rVert\mid c_u\neq e_u\}\right)},&\text{ if }c\neq e \\
0,&\text{ if } c=e
\end{cases}
\]
for all $c,e\in A^{\MM^d}$. This defines a compact space. Cylinders form a countable clopen (open and closed) base of the topology that the metric $\delta$ induces.

For every $u\in\MM^d$ we define a \emph{shift map} $\sigma_u:A^{\MM^d}\rightarrow A^{\MM^d}$ by $\sigma_u(c)_v=c_{u+v}$ for all $c\in A^{\MM^d}$ and $v\in\MM^d$. The maps $\sigma_u$ are continuous. A subset $X\subseteq A^{\MM^d}$ is called a \emph{subshift} if it is closed and invariant under $\sigma_u$ for all $u\in\MM^d$. The subshift $A^{\MM^d}$ is called a \emph{$d$-dimensional full shift}. For any $n\in\NN$ we denote $\mathcal{C}_n=[0,n)^d$. A configuration $c\in A^{\MM^d}$ \emph{avoids} $p\in A^{\mathcal{C}_n}$ if $\sigma_u(c)_{\mathcal{C}_n}\neq p$ for all $u\in\MM^d$; otherwise $p$ \emph{appears} in $c$. Let $S\subseteq \bigcup_{n\in\NN} A^{\mathcal{C}_n}$, and let $X_S$ be the set of configurations that avoid $S$, i.e. $X_S=\{c\in A^{\MM^d}\mid\forall p\in S:c\text{ avoids }p\}$. It is well-known that the topological definition of subshifts is equivalent to saying that there exists a set $S$ of forbidden patterns such that $X=X_S$. If there exists a finite set $S$ such that $X=X_S$, then $X$ is a \emph{subshift of finite type}. If there exists a subshift of finite type $Y$ and a continuous shift commuting map $F$ such that $F(Y)=X$, then $X$ is a \emph{sofic shift}.

\subsection{Cellular automata}

A \emph{cellular automaton} is a dynamical system $(X,F)$ where $X\subseteq A^{\MM^d}$ is a subshift and $F:X\rightarrow X$ is a continuous map which commutes with the shift maps, i.e. $F\sigma_u=\sigma_uF$ for all $u\in\MM^d$. We will only consider cellular automata over full shifts. When $\mathbb{M}=\mathbb{N}$, the cellular automaton is called \emph{one-sided} and when $\mathbb{M}=\mathbb{Z}$, the cellular automaton is called \emph{two-sided}. There has not been much study of  one-sided cellular automata for $d>1$. We will often refer to a cellular automaton by the function name alone, i.e. talk about the cellular automaton $F$. Let $D\subset\MM^d$ be a finite set and let $G_l:A^D\rightarrow A$. Define $G:A^\mathbb{M}\rightarrow A^\mathbb{M}$ by $G(c)_u = G_l((\sigma_u(c))_D)$ for all $u\in\MM^d$. Now $G$ is continuous and commutes with the shift maps, so it is a cellular automaton. The set $D$ is a \emph{local neighborhood} of $G$ and the function $G_l$ is a \emph{local rule} of $G$. According to the Curtis-Hedlund-Lyndon Theorem \cite{Hedlund69} every cellular automaton is defined by a local rule. Let $r\in\mathbb{N}$ be a number such that $D\subseteq[-r,r]^d$, then $r$ is a \emph{radius} of $G$.

A cellular automaton $F$ is \emph{reversible} if there exists a cellular automaton $G$ such that $FG=\id=GF$ where $\id:A^{\ZZ^d}\rightarrow A^{\ZZ^d}$ is defined by $\id(c)=c$ for all $c\in A^{\ZZ^2}$ (which is clearly a cellular automaton). Classical results \cite{Hedlund69} say that every bijective cellular automaton is  reversible, and further that injective cellular automaton is also surjective so that injectivity, bijectivity, and reversibility, are equivalent conditions for cellular automata.

Let $(A^{\MM^d},F)$ and $(B^{\MM^d},G)$ be two cellular automata. Let $H:A^{\MM^d}\rightarrow B^{\MM^d}$ be a \emph{homomorphism} from $(A^{\MM^d},F)$ to $(B^{\MM^d},G)$, i.e. a continuous map such that $HF=GH$. If $H$ is surjective, it is a \emph{factor map} and $(B^{\MM^d},G)$ is a \emph{factor} of  $(A^{\MM^d},F)$. If $H$ is injective, it is an \emph{embedding} and $(B^{\MM^d},G)$ is a \emph{subystem} of  $(A^{\MM^d},F)$. If $H$ is bijective, it is a \emph{conjugacy} and $(A^{\MM^d},F)$ and  $(B^{\MM^d},G)$ are \emph{conjugate}, which we denote by $(A^{\MM^d},F)\cong (B^{\MM^d},G)$. If $H:A^{\MM^d}\rightarrow B^{\MM^d}$ also commutes with the shift maps, then it is a \emph{strong homomorphism}. We define \emph{strong factor}, \emph{strong subsystem}, and \emph{strongly conjugate}, when the corresponding homomorphism is a strong homomorphism. If $F$ and $G$ are strongly conjugate, we denote $(A^{\MM^d},F)\scong (A^{\MM^d},G)$.

Orbits of a cellular automaton $F:A^{\MM^d}\rightarrow A^{\MM^d}$ are often considered as \emph{space-time diagrams}. These are $(d+1)$-dimensional configurations defined as follows
\[
st(F)=\{ (s^{(i)})_{i\in\NN}\in (A^{\MM^d})^{\NN}\mid \forall i\in\NN: s^{(i+1)}=F(s^{(i)})\}.
\]
If $F$ is surjective, then we can also consider $st(F)\subseteq (A^{\MM^d})^\ZZ$. The space-time diagrams are especially convenient for one-dimensional cellular automata, since it is often easier to get a sense of the dynamics from the static two-dimensional picture rather than the real-time simulations of a cellular automaton. In figures we will have time advancing downwards. Let $\mathcal{R}_F(n,t)$ denote the $\mathcal{C}_n\times[0,t)$ patterns that appear in $st(F)$, i.e. appear in some configurations of $st(F)$. The \emph{(topological) entropy} of $F$ is
\[
h(A^{\MM^d},F)=\lim_{n\rightarrow\infty}\lim_{t\rightarrow\infty}\frac{1}{t}\log_2\lvert\mathcal{R}_F(n,t)\rvert.
\]
We need the following basic property of entropy:
\begin{proposition}{(\cite[Proposition 2.88.]{Kurka03})}
\label{entropy-of-subsystem-and-factor}
If $(B^{\MM^d},G)$ is a subsystem or a factor of $(A^{\MM^d},F)$, then $h(B^{\MM^d},G)\leq h(A^{\MM^d},F)$. It follows that if $(A^{\MM^d},F)$ and $(B^{\MM^d},G)$ are conjugate, then $h(A^{\MM^d},F)=h(B^{\MM^d},G)$.
\end{proposition}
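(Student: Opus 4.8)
The plan is to prove the two inequalities — one for a factor, one for an embedding — directly from the definition of $h$ in terms of space-time-diagram patterns, and then deduce the conjugacy statement, since a conjugacy is at once a factor map and an embedding. The single preparatory observation I would isolate is a finite-window-dependence property of homomorphisms: if $H\colon A^{\MM^d}\to B^{\MM^d}$ is continuous, then for every $m\in\NN$ there is $N(m)\in\NN$ such that $H(c)_{\mathcal{C}_m}$ is determined by $c_{\mathcal{C}_{N(m)}}$. This is merely uniform continuity of $H$ on the compact space $A^{\MM^d}$ together with the finiteness of $B^{\mathcal{C}_m}$; since $H$ is \emph{not} assumed to commute with the shift maps, this window bound is exactly what stands in for Curtis--Hedlund--Lyndon locality below. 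I would also note the routine facts that $\lvert\mathcal{R}_F(n,t)\rvert$ is nondecreasing in $n$ (so the outer limit defining $h$ is a supremum over $n$), that it is submultiplicative in $t$ (so the inner limit exists by Fekete), and that by spatial shift-invariance of $st(F)$ one may compute $\lvert\mathcal{R}_F(n,t)\rvert$ using only diagrams in which the pattern occupies $\mathcal{C}_n\times[0,t)$ at the origin, so that $\mathcal{R}_F(n,t)=\{(s^{(0)}_{\mathcal{C}_n},\dots,s^{(t-1)}_{\mathcal{C}_n})\mid (s^{(i)})_i\in st(F)\}$.

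Factor case: let $H\colon A^{\MM^d}\to B^{\MM^d}$ be surjective with $HF=GH$. Every space-time diagram of $G$ lifts to one of $F$: given $(s^{(i)})_{i\in\NN}\in st(G)$, choose $c^{(0)}\in H^{-1}(s^{(0)})$ and set $c^{(i)}=F^i(c^{(0)})$; then $(c^{(i)})_i\in st(F)$ and $H(c^{(i)})=G^i(H(c^{(0)}))=s^{(i)}$. Conversely, applying $H$ levelwise to any diagram in $st(F)$ produces a diagram in $st(G)$, since $H(F^i(c))=G^i(H(c))$. Combining these with the window bound applied levelwise shows that the map sending the pattern $(c^{(0)}_{\mathcal{C}_{N(m)}},\dots,c^{(t-1)}_{\mathcal{C}_{N(m)}})$ of an $F$-diagram to the pattern $(H(c^{(0)})_{\mathcal{C}_m},\dots,H(c^{(t-1)})_{\mathcal{C}_m})$ of the corresponding $G$-diagram is a well-defined surjection from $\mathcal{R}_F(N(m),t)$ onto $\mathcal{R}_G(m,t)$; hence $\lvert\mathcal{R}_G(m,t)\rvert\le\lvert\mathcal{R}_F(N(m),t)\rvert$. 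Dividing by $t$, letting $t\to\infty$, and then $m\to\infty$ gives $h(B^{\MM^d},G)\le h(A^{\MM^d},F)$.

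Subsystem case: let $H\colon B^{\MM^d}\to A^{\MM^d}$ be an embedding with $HG=FH$. Since $B^{\MM^d}$ is compact, $H$ is a homeomorphism onto its image $Y=H(B^{\MM^d})$, which is closed, and $F(Y)=H(G(B^{\MM^d}))=Y$, so $(Y,\restr{F}{Y})$ is a dynamical system conjugate to $(B^{\MM^d},G)$ via $H$. Straight from the definition, $st(\restr{F}{Y})\subseteq st(F)$ gives $\mathcal{R}_{\restr{F}{Y}}(n,t)\subseteq\mathcal{R}_F(n,t)$, hence $h(Y,\restr{F}{Y})\le h(A^{\MM^d},F)$. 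Applying the factor argument of the previous paragraph to the surjection $H\colon (B^{\MM^d},G)\to(Y,\restr{F}{Y})$ and to its continuous inverse — the argument used only continuity and surjectivity, never that the codomain is a full shift — yields $h(B^{\MM^d},G)=h(Y,\restr{F}{Y})$, so $h(B^{\MM^d},G)\le h(A^{\MM^d},F)$. Finally, if $(A^{\MM^d},F)$ and $(B^{\MM^d},G)$ are conjugate, each is both a factor and a subsystem of the other, so the two inequalities give $h(A^{\MM^d},F)=h(B^{\MM^d},G)$.

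I do not expect a genuine obstacle; what keeps the statement from being completely trivial is only that the homomorphisms need not be shift-commuting, which forces two appeals to compactness — one for the uniform-continuity window bound $N(m)$, one to see that an embedding has closed, $F$-invariant image on which $F$ restricts — in place of the cleaner transport of spatial windows one would get from a strong homomorphism. (Alternatively, one can observe that the quantity $h$ above is nothing but the Adler--Konheim--McAndrew topological entropy of the map $F$ on the compact space $A^{\MM^d}$, using that the $\mathcal{C}_n$-cylinder covers and their spatial translates refine and generate the topology, and then simply quote the standard facts that topological entropy is non-increasing under factor maps and invariant under conjugacy.)
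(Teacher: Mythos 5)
The paper never proves this proposition --- it is imported verbatim as \cite[Proposition~2.88]{Kurka03} --- so there is no in-paper argument to compare against; what you have written is a correct, self-contained derivation straight from the paper's pattern-counting definition of $h$, and its architecture (a uniform-continuity window bound standing in for Curtis--Hedlund--Lyndon locality, levelwise pushing/lifting of space-time diagrams, a surjection between pattern sets, and reduction of the subsystem case to the factor case via compactness of $B^{\MM^d}$) is the standard one. Two small slips, neither fatal. First, for $\MM=\ZZ$ uniform continuity gives that $H(c)_{\mathcal{C}_m}$ is determined by $c_{[-N,N]^d}$, not by $c_{\mathcal{C}_{N(m)}}$: the cylinders on $\mathcal{C}_N=[0,N)^d$ do not shrink to points in the metric $\delta$. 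This costs nothing, since spatial shift-invariance of $st(F)$ gives that the number of patterns on $[-N,N]^d\times[0,t)$ equals $\lvert\mathcal{R}_F(2N+1,t)\rvert$, and $\lvert\mathcal{R}_G(m,t)\rvert\le\lvert\mathcal{R}_F(2N+1,t)\rvert$ is all the inequality needs. Second, in the subsystem case $F(Y)=H(G(B^{\MM^d}))$ is only \emph{contained} in $Y$ unless $G$ is surjective; containment suffices for $(Y,\restr{F}{Y})$ to be a dynamical system. Relatedly, $Y$ need not be shift-invariant (as $H$ is not assumed shift-commuting), so one cannot identify $\mathcal{R}_{F|_{Y}}(n,t)$ with origin-anchored patterns; but your chain only uses the inclusion $\mathcal{R}_{F|_{Y}}(n,t)\subseteq\mathcal{R}_F(n,t)$ (valid because ``appears'' already quantifies over all translates) together with a surjection from origin-anchored $\restr{F}{Y}$-patterns onto $\mathcal{R}_G(m,t)$, so it survives intact. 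Your closing parenthetical --- that $h$ is the Adler--Konheim--McAndrew entropy computed along the refining sequence of cylinder partitions, whence monotonicity under factors and embeddings is classical --- is the shortest correct route and is essentially what K\r{u}rka's cited proposition is.
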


The \emph{direct product} of cellular automata $(A^{\MM^d},F)$ and $(B^{\MM^d},G)$ is $(A^{\MM^d}\times B^{\MM^d}, F\times G)$, where $F\times G:A^{\MM^d}\times B^{\MM^d}\rightarrow A^{\MM^d}\times B^{\MM^d}$ is defined by $(F\times G)(c,e)=(F(c),G(e))$. It is immediate from the definition of entropy that $h(A^{\MM^d}\times B^{\MM^d},F\times G)=h(A^{\MM^d},F)+h(B^{\MM^d},G)$, since $\lvert\mathcal{R}_{F\times G}(n,t)\rvert=\lvert\mathcal{R}_F(n,t)\rvert\cdot\lvert\mathcal{R}_G(n,t)\rvert$.

\section{One-dimensional one-sided cellular automata}

Our first main result considers one-dimensional cellular automata. We give our proof for one-sided cellular automata since the two-sided variant easily follows from this. To this end, in this section we present some simple and well-known basic properties of one-dimensional one-sided cellular automata. After the general results, we discuss a simple example of a reversible one-sided cellular automaton which was also considered in \cite{DartnellMaassSchwartz03}. We show that this cellular automaton has positive entropy and that when restricted to finite words in a certain way, it defines a cyclic permutation of these finite words.

\subsection{Preliminary results about one-dimensional one-sided cellular automata}

Let $F:A^\NN\rightarrow A^\NN$ be a cellular automaton. For every $n\in\mathbb{N}$ we define the \emph{$n$\textsuperscript{th} trace subshift of $F$} as
\[
\tau_n(F)=\big\{e\in \big(A^{n}\big)^\mathbb{N}\mid \exists c\in A^\NN:\forall i\in\mathbb{N}: e_i=\big(F^i(c)\big)_{[0,n)}\big\}.
\]
For surjective cellular automata we can consider $\tau_n(F)$ as a subshift of $(A^n)^\ZZ$. Let $X\subseteq A^\MM$ be a subshift, then we denote $p_n(X)=\lvert\{u\in A^n\mid\exists c\in X: c_{[0,n)}=u\}\rvert$, i.e. the number of different words of length $n$ that appear in $X$. Then the entropy of $F$ can be expressed as
\[
h(A^{\NN},F) = \lim_{n\rightarrow\infty} \lim_{t\rightarrow\infty}\frac{1}{t}\log_2 p_t(\tau_n(F)).
\]
Let $r$ be a radius of $F$. Since $p_n(\tau_{r+1}(F))=\lvert A\rvert\cdot p_n(\tau_r(F))$ for one-sided cellular automata, we get the following:

\begin{proposition}
\label{upper-bound-on-entropy}
Let $F:A^\mathbb{N}\rightarrow A^\mathbb{N}$ be a cellular automaton with radius $r$. Then the entropy of $(A^\NN,F)$ is given by $h(A^\NN,F)=\lim_{t\rightarrow\infty}\frac{1}{t}\log_2 p_t(\tau_r(F))$.
\end{proposition}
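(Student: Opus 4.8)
The plan is to derive the one-line formula directly from the expression $h(A^\NN,F)=\lim_{n\to\infty}\lim_{t\to\infty}\frac1t\log_2 p_t(\tau_n(F))$ recalled just above, using the quoted identity $p_t(\tau_{r+1}(F))=|A|\cdot p_t(\tau_r(F))$ to show that the inner limit has already stabilised by the time $n=r$.

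First I would observe that whenever $r$ is a radius of $F$, so is every larger integer; hence the quoted identity applies with $r$ replaced by any $m\ge r$, and iterating it yields
\[
p_t(\tau_m(F)) = |A|^{m-r}\,p_t(\tau_r(F)) \qquad\text{for all } m\ge r,\ t\in\NN .
\]
Taking $\log_2$ and dividing by $t$ gives $\frac1t\log_2 p_t(\tau_m(F)) = \frac{m-r}{t}\log_2|A| + \frac1t\log_2 p_t(\tau_r(F))$. The limit $L:=\lim_{t\to\infty}\frac1t\log_2 p_t(\tau_r(F))$ exists and is finite, since $\tau_r(F)$ is a subshift, so $t\mapsto\log_2 p_t(\tau_r(F))$ is subadditive and Fekete's lemma applies, and $p_t(\tau_r(F))\le|A|^{rt}$ bounds $L$ by $r\log_2|A|$. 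Letting $t\to\infty$ in the previous display, the term $\frac{m-r}{t}\log_2|A|$ vanishes, so $\lim_{t\to\infty}\frac1t\log_2 p_t(\tau_m(F))=L$ for every $m\ge r$.

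Consequently the inner limit in the formula for $h(A^\NN,F)$ equals the constant $L$ for all $n\ge r$, so the outer limit $\lim_{n\to\infty}$ also equals $L=\lim_{t\to\infty}\frac1t\log_2 p_t(\tau_r(F))$, which is the assertion.

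I do not expect a genuine obstacle: all the real content sits in the identity $p_t(\tau_{r+1}(F))=|A|\cdot p_t(\tau_r(F))$ — the combinatorial fact that a width-$(r+1)$, height-$t$ window of a space-time diagram of a one-sided cellular automaton of radius $r$ carries exactly one free symbol more than the width-$r$ window — and that identity is given. The only points worth a sentence are that radii are closed upward (so the identity may be iterated) and that the limit over $t$ exists, which is immediate from subadditivity of the pattern-counting function of the subshift $\tau_r(F)$.
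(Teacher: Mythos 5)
Your proposal is correct and is exactly the argument the paper intends: the paper states the identity $p_t(\tau_{r+1}(F))=|A|\cdot p_t(\tau_r(F))$ and lets the proposition "follow" from it, and your write-up just makes the iteration to all $m\ge r$, the vanishing of $\frac{m-r}{t}\log_2|A|$ as $t\to\infty$, and the existence of the inner limit (via subadditivity) explicit. No different route, no gap.
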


For a state $a\in A$ we denote $a^\omega\in A^\NN$ the configuration such that $a^\omega (i)=a$ for all $i\in \NN$. A state $q\in A$ is \emph{quiescent} if $F(q^\omega)=q^\omega$. A cellular automaton is \emph{nilpotent} if there exists a quiescent state $q$ such that for every $c\in A^\NN$ there exists $n\in\mathbb{N}$ such that $F^n(c)=q^\omega$. It is known that for cellular automata nilpotency implies uniform nilpotency:

\begin{proposition}{(\cite{CulikPachlYu89})}
\label{uniformlyNilpotent}
Let $F:A^\NN\rightarrow A^\NN$ be a nilpotent cellular automaton. Then there exists $n\in\mathbb{N}$ such that for all $c\in A^\NN$ it holds that $F^n(c)=q^\omega$.
\end{proposition}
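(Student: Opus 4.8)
The plan is to run a Baire category argument on the level sets
\[
U_n \;=\; (F^n)^{-1}\bigl(\{q^\omega\}\bigr) \;=\; \{\, c\in A^\NN \mid F^n(c)=q^\omega \,\},
\]
and then to use the bounded radius of $F$ to turn information about one such $U_N$ into a statement about the whole space.

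First I would record the easy properties. Each $U_n$ is closed, being the preimage of the closed singleton $\{q^\omega\}$ under the continuous map $F^n$. The sequence is increasing: since $q$ is quiescent, $F^n(c)=q^\omega$ forces $F^{n+1}(c)=F(q^\omega)=q^\omega$, so $U_n\subseteq U_{n+1}$. By nilpotency every configuration reaches $q^\omega$, so $\bigcup_n U_n = A^\NN$. Thus $A^\NN$ is a countable increasing union of closed sets; since a compact metric space is a Baire space, some $U_N$ is not nowhere dense, and, being closed, it therefore has nonempty interior. As cylinders form a base of the topology, there is a finite word $w\in A^{[0,m)}$ with $[w]\subseteq U_N$. (One can also avoid quoting Baire: if every $U_n$ had empty interior one could recursively choose nested cylinders $[p_1]\supseteq[p_2]\supseteq\cdots$ with domains exhausting $\NN$ and $[p_n]\cap U_{n-1}=\emptyset$, and their unique common point would lie in no $U_n$, contradicting $\bigcup_n U_n = A^\NN$.)

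The heart of the argument is to show that a single cylinder inside $U_N$ already forces $F^N\equiv q^\omega$, and this is exactly where one-sidedness is used. Let $r$ be a radius of $F$, so that $Nr$ is a radius of $F^N$ and $F^N(c)_i$ depends only on $c_{[i,i+Nr]}$; let $(F^N)_{\mathrm{loc}}\colon A^{Nr+1}\to A$ be the corresponding local rule. Given an arbitrary word $u\in A^{Nr+1}$, pick a configuration $c$ with $c_{[0,m)}=w$ and $c_{[m,m+Nr+1)}=u$; these two coordinate windows are disjoint, so the choice is consistent. Then $c\in[w]\subseteq U_N$, so $F^N(c)=q^\omega$, and in particular $F^N(c)_m=q$. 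But $F^N(c)_m$ depends only on $c_{[m,m+Nr]}=u$, so $(F^N)_{\mathrm{loc}}(u)=q$. Since $u$ was arbitrary, $(F^N)_{\mathrm{loc}}$ is identically $q$, i.e.\ $F^N(c)=q^\omega$ for every $c\in A^\NN$, and $n=N$ is the sought bound.

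The step I expect to need the most care is this last one: I must make sure that placing the test word $u$ strictly to the right of the prescribed prefix $w$ genuinely leaves $F^N(c)_m$ unconstrained by $w$, which is precisely what a one-sided (rightward) neighbourhood guarantees, since then $F^N(c)_m$ depends only on coordinates $\ge m$. For a two-sided automaton the analogous final step is different: there $U_N$ is a subshift, so $[w]\subseteq U_N$ together with shift-invariance gives that $U_N$ contains every configuration in which $w$ occurs, and since every configuration is a limit of such configurations and $U_N$ is closed, $U_N=A^\ZZ$. The remaining ingredients — that $A^\NN$ is a Baire space and that the $U_n$ are genuinely closed — are routine.
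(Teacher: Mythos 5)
Your proof is correct. Note that the paper does not actually prove this statement: it is imported wholesale from \cite{CulikPachlYu89} as a known fact, so there is no in-paper argument to compare against. Your derivation is a clean, self-contained justification: the sets $U_n=(F^n)^{-1}(\{q^\omega\})$ are closed and increasing with union $A^\NN$, Baire category (or your explicit nested-cylinder alternative) forces some $U_N$ to contain a cylinder $[w]$ with domain $[0,m)$, and then the one-sided locality of $F^N$ --- namely that $F^N(c)_m$ depends only on $c_{[m,m+Nr]}$, a window disjoint from the domain of $w$ --- lets you test every word $u\in A^{Nr+1}$ against the local rule of $F^N$ and conclude it is constantly $q$. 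The one step that genuinely needs care is exactly the one you flagged: that the prescribed prefix $w$ places no constraint on $F^N(c)_m$, which holds precisely because the neighbourhood of a one-sided automaton lies in $[0,r]$; your handling of it is correct, and your remark on how the final step changes in the two-sided case (shift-invariance of $U_N$ plus closedness) is also accurate.
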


Let $(A^\NN,F)$ be a cellular automaton whose local neighborhood contains cells $0$ and $1$. Then a state $s\in A$ is \emph{spreading} if the local rule maps every neighborhood containing $s$ to $s$. Such a state spreads in the sense that if $c_i=s$ for some $c\in A^\NN$ and $i\in\NN\setminus\{0\}$ then $F(c)_i=s$ and $F(c)_{i-1}=s$. Clearly a spreading state is quiescent. We need the following result, which follows from a simple compactness argument.

\begin{proposition}
\label{spacetime-diagram-without-spreading-state}
Let $F:A^\NN\rightarrow A^\NN$ be a cellular automaton that is not nilpotent, and let $s\in A$ be a spreading state. Then there exists $c\in A^\NN$ such that $F^i(c)_j\neq s$ for all $i,j\in\NN$.
\end{proposition}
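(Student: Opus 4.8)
The plan is to prove Proposition~\ref{spacetime-diagram-without-spreading-state} by a standard compactness argument applied to finite ``windows'' of space-time diagrams, extracting a configuration whose orbit never produces the spreading state $s$. The key point to exploit is the contrapositive flavour of the hypothesis: if \emph{every} configuration eventually hit $s$ somewhere, then by Proposition~\ref{uniformlyNilpotent} (applied via the spreading property) this would force nilpotency, contradicting the assumption.

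First I would argue that for each $n\in\NN$ there exists a configuration $c^{(n)}\in A^\NN$ such that $s$ does not occur in the finite ``triangle'' or rectangle $(F^i(c^{(n)}))_j$ for $0\le i,j\le n$. Suppose not: then for some fixed $n$, every $c\in A^\NN$ satisfies $F^i(c)_j=s$ for some $i,j\le n$. Because $s$ is spreading, once it appears at some cell at time $i$, it persists and moves left, so in fact $F^n(c)_0=s$ for every $c$ --- more carefully, if $s$ appears at position $j$ at time $i\le n$, then after at most $j$ further steps it reaches cell $0$, and $j\le n$ and $i\le n$, so within $i+j\le 2n$ steps cell $0$ carries $s$; replacing $n$ by a suitable bound this gives a uniform time $N$ with $F^N(c)_0=s$ for all $c$. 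Since this holds for all $c$, and $s$ is quiescent, shift-invariance gives $F^N(c)_k=s$ for all $k$ and all $c$ (apply the bound to $\sigma_k(c)$ and use that $F$ commutes with $\sigma_k$ and $s^\omega$ is fixed), i.e. $F^N(c)=s^\omega$ for every $c$. This is exactly nilpotency with quiescent state $s$, contradicting the hypothesis that $F$ is not nilpotent. Hence the desired $c^{(n)}$ exists for every $n$.

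Next, by compactness of $A^\NN$, the sequence $(c^{(n)})_{n\in\NN}$ has a convergent subsequence; let $c$ be its limit. I claim $F^i(c)_j\neq s$ for all $i,j\in\NN$. Fix $i$ and $j$; the value $F^i(c)_j$ depends only on $c$ restricted to a bounded window $[0, j+ir]$ where $r$ is a radius of $F$ (using one-sidedness so the window stays in $\NN$). For all sufficiently large $m$ in the subsequence, $c^{(m)}$ agrees with $c$ on this window \emph{and} has $m\ge \max(i,j)$, so $F^i(c^{(m)})_j=F^i(c)_j$ and, by choice of $c^{(m)}$, this value is not $s$. Therefore $F^i(c)_j\ne s$, as wanted.

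The main obstacle --- really the only subtle point --- is making the bookkeeping in the first step precise: one must convert ``$s$ appears somewhere in a bounded window for every configuration'' into ``$s$ covers the whole configuration at a single uniform time for every configuration,'' and this uses both the spreading property (to propagate $s$ to cell $0$ within a bounded number of steps) and shift-invariance together with quiescence of $s$ (to propagate from a single cell to all cells). Once that reduction is in place, the contradiction with non-nilpotency is immediate and the compactness extraction is routine. I would also remark that one-sidedness is what keeps all the relevant neighborhoods inside $\NN$, so no separate argument is needed at the boundary cell $0$.
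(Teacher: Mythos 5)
Your proof is correct and takes essentially the same approach as the paper: the paper likewise produces, for each $n$, a configuration whose space-time diagram avoids $s$ on an $n\times n$ window (justifying this tersely by noting that otherwise the appearing states $s$ would spread and force nilpotency) and then extracts the desired $c$ by compactness. Your write-up simply makes explicit the spreading-to-cell-$0$ and shift-invariance bookkeeping that the paper leaves implicit.
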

\begin{proof}
For every $n\in\NN$ there exists $c^{(n)}\in A^\NN$ such that $F^i(c^{(n)})_j\neq s$ for every $(i,j)\in\{(x,y)\in\NN^2\mid x,y\leq n\}$, since otherwise the appearing states $s$ would spread and $F$ would be nilpotent. By compactness the sequence $(c^{(n)})_{n\in\NN}$ has a converging subsequence $(c^{(i)})_{i\in\mathcal{I}}$, and the limit of this sequence, say $c$, has that $F^i(c)_j\neq s$ for all $i,j\in\NN$ as was claimed.
\end{proof}

Consider a one-sided reversible cellular automaton $F:A^{\mathbb{N}}\rightarrow A^{\mathbb{N}}$ such that both $F$ and its inverse $F^{-1}$ have radius $1$. In many cases this restriction for radius is not a serious one as every reversible cellular automaton is conjugate (though maybe not \emph{strongly} conjugate) to such a cellular automaton through some grouping of cells. Notice that a cellular automaton $F:A^\NN\rightarrow A^\NN$ naturally defines a cellular automaton $F:A^\ZZ\rightarrow A^\ZZ$, however $F$ might be reversible over $A^\ZZ$ but non-reversible over $A^\NN$ (shift map being the canonical counter example). It is easy to see that for every fixed $a\in A$ the map $F(\_a):A\rightarrow A,x\mapsto F(xa)$ has to be a permutation if $F$ is reversible over $A^\NN$; we will denote this permutation with $\rho_a$. Not every set of permutations $\{\rho_a\}_{a\in A}$ defines a reversible cellular automaton, for example if $\rho_a(a)=\rho_b(b)$ for some $a\in A,b\in A\setminus\{a\}$ then the resulting cellular automaton has $F(a^\omega)=F(b^\omega)$. We refer the reader to \cite{DartnellMaassSchwartz03} for a detailed combinatorial considerations of reversible one-sided cellular automata such that the cellular automaton and its inverse have radius $1$, and to \cite{AcerbiDennunzioFormenti07} for some relations between two-sided cellular automaton $(A^\ZZ,F)$ with one-sided neighborhood and the one-sided cellular automaton $(A^\NN,F)$ defined by the same local rule. For our purposes the example considered in the following Section \ref{021example} is sufficient.

\subsection{Example: A reversible one-sided cellular automaton with positive entropy}
\label{021example}

Define a one-sided cellular automaton $\zot:\{0,1,2\}^\mathbb{N}\rightarrow \{0,1,2\}^\mathbb{N}$ with the following permutations:
\[
\begin{array}{lr}
&0\mapsto 0\\
\rho_0=\rho_2:&1\mapsto 2\\
&2\mapsto 1\\
\end{array}
\qquad
\begin{array}{lr}
&0\mapsto 1\\
\rho_1:&1\mapsto 2\\
&2\mapsto 0\\
\end{array}.
\]
This is reversible, and its inverse also has radius one, as the permutations $\pi_0=\pi_1=(0)(12),\pi_2=(021)$ can be verified to define the inverse of $\zot$. This example was already considered in \cite{DartnellMaassSchwartz03}. We will compute its entropy.

\begin{proposition}
\label{entropy of 012 CA}
The entropy of $(\{0,1,2\}^\NN,\zot)$ is $h(\{0,1,2\}^\NN,\zot)=\frac{1}{2}$.
\end{proposition}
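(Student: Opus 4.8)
The plan is to compute the entropy via Proposition~\ref{upper-bound-on-entropy}, so it suffices to understand the trace subshift $\tau_1(\zot)$ on the single cell $\{0\}$, i.e.\ the set of columns $(\zot^i(c)_0)_{i\in\NN}$, and to count the number $p_t(\tau_1(\zot))$ of length-$t$ prefixes of such columns. Because $\zot$ has radius $1$ with neighborhood $\{0,1\}$ and acts by the permutations $\rho_{c_1}$ on cell $0$, the value of the orbit at cell $0$ depends on the sequence of states appearing at cell $1$, whose orbit in turn depends on cell $2$, and so on. So the first step is to set up the bookkeeping: given a prescribed doubly-indexed array of permutation choices coming from cell $1$, the column above cell $0$ is determined by the initial symbol $c_0$, and I want to count how many distinct such columns of height $t$ are realizable as $c$ ranges over all of $\{0,1,2\}^\NN$.

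The key observation I would isolate is the following ``propagation'' structure. Write $a_i = \zot^i(c)_0$ and $b_i = \zot^i(c)_1$. Then $a_{i+1} = \rho_{b_i}(a_i)$, where $\rho_0=\rho_2 = (12)$ and $\rho_1 = (012)$. The point is that the sequence $(b_i)$ is itself the cell-$0$ column of the shifted configuration $\sigma_1(c)$, hence is an arbitrary element of $\tau_1(\zot)$; but more usefully, since $\rho_0 = \rho_2$, the action on cell $0$ only sees whether $b_i = 1$ or $b_i \in \{0,2\}$. So I would argue: (a) for the upper bound, the column $(a_i)_{i<t}$ is a function of $a_0$ together with the binary sequence $(\mathbf{1}_{b_i = 1})_{i<t}$, giving at most $3 \cdot 2^t$ possibilities, hence $p_t(\tau_1(\zot)) \le 3\cdot 2^t$ and $h \le 1$; this crude bound is not yet $\tfrac12$, so the real work is to show the $b_i$ themselves are heavily constrained. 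I would then look one level deeper: the sequence $(b_i)$ is governed by $(\mathbf{1}_{c''_i=1})$ where $c'' $ is the orbit of cell $2$, and track how information can actually enter cell $0$. I expect the correct statement to be that only on the order of $t/2$ binary ``bits'' of freedom actually reach cell $0$ within $t$ steps, because to affect cell $0$ a perturbation at cell $k$ must travel left, and the structure of the permutations (in particular the order-$2$ vs order-$3$ behavior, and the fact that $\rho_0=\rho_2$ kills one coordinate of information per step) halves the effective rate. That gives $p_t(\tau_1(\zot)) = 2^{t/2 + O(1)}$ and hence $h = \tfrac12$.

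For the matching lower bound I would exhibit an explicit family: construct, for each choice of a length-$\lfloor t/2\rfloor$ binary string $w$, a configuration $c^{(w)}$ whose cell-$0$ column over the first $t$ steps decodes $w$ injectively — for instance by placing a controlled pattern far enough to the right that it feeds a prescribed sequence of $1$'s and non-$1$'s into cell $1$ at the right times, using the cyclic-permutation-on-finite-words property mentioned in the section introduction (the restriction of $\zot$ to suitable finite words being a cyclic permutation) to guarantee all desired feeding sequences are actually realizable and distinguishable at cell $0$. Combining the two bounds with Proposition~\ref{upper-bound-on-entropy} yields $h(\{0,1,2\}^\NN,\zot) = \tfrac12$.

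The main obstacle I anticipate is the precise combinatorial accounting in the upper bound: showing that the naive rate $1$ genuinely drops to $\tfrac12$ requires a careful light-cone argument tracking which of the binary freedoms in the deeper cells can actually influence cell $0$ within $t$ steps, and correctly handling the interaction between the $(012)$-cycles (which carry full information) and the $(12)$-transpositions (which lose a bit per application). I would expect to formalize this via an inductive claim bounding $|\{(\zot^i(c)_0)_{i<t} : c_{[0,k)} \text{ fixed}\}|$ by something like $2^{(t-k)^+ + O(1)}$, or equivalently by directly analyzing $p_t(\tau_1(\zot))$ through a transfer-matrix/de-Bruijn-graph description of $\tau_1(\zot)$ and computing its growth rate to be $\sqrt 2$.
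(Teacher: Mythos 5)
Your overall strategy (reduce to counting $p_t(\tau_1(\zot))$ via Proposition~\ref{upper-bound-on-entropy}, then pin down the trace subshift) matches the paper's, and your first reduction is sound: since $\rho_0=\rho_2$, the column above cell $0$ is determined by $a_0$ together with the binary sequence recording where cell $1$ holds a $1$, which gives $p_t(\tau_1(\zot))\le 3\cdot 2^t$ and $h\le 1$. But the step that brings $1$ down to $\tfrac{1}{2}$ is exactly the part you defer as an ``anticipated obstacle,'' and the mechanism you propose for it does not work. A light-cone count gives nothing: with neighborhood $\{0,1\}$ every cell in $[0,t]$ influences the column within $t$ steps, so speed-of-propagation arguments cannot improve the trivial bound. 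And the observation that ``$\rho_0=\rho_2$ kills one coordinate of information per step'' is precisely what you already spent to obtain $h\le 1$; it cannot be reused to halve the rate a second time. The halving has a different source, which your sketch never identifies.

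The paper's argument is local and crucially uses the inverse rule. First, $1\mapsto 2$ under every $\rho_b$, so every column is a concatenation of blocks $0$ and $12$; this alone only gives golden-ratio entropy $\log_2\frac{1+\sqrt5}{2}>\tfrac12$. Second, the word $201$ cannot occur in a column: the only permutation sending $0$ to $1$ is $\rho_1$, but $\pi_1(0)\neq 2$. Third, the $\{00,12\}$-blocks in a space-time diagram satisfy a modulo-two addition identity (Figure~\ref{fig:201}), and iterating it shows that any occurrence of $20^{n}1$ with $n$ odd forces an occurrence of the forbidden $201$ (Figure~\ref{fig:odd}); hence the $12$-blocks sit in even phase and $\tau_1(\zot)\subseteq\{00,12\}^{\ZZ}$, so $p_t(\tau_1(\zot))=O(2^{t/2})$. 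For the lower bound the paper does not need the cyclic-permutation property you invoke (Proposition~\ref{enumeration} is proved \emph{after} this computation and itself relies on these figures): one writes an arbitrary word over $\{00,12\}$ in the leftmost column and fills the remaining columns by the XOR rule to obtain a legal space-time diagram (Figure~\ref{fig:xorlike}). Your transfer-matrix suggestion could in principle replace all of this, but determining a presentation of $\tau_1(\zot)$ \emph{is} the content of the proof, not a routine step that can be left for later.
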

\begin{proof}
According to Proposition \ref{upper-bound-on-entropy} it is enough to compute $p_t(\tau_1(\zot))$. From the local rule we see that $0$ maps to $0$ or $1$, $1$ always maps to $2$, and $2$ maps to $0$ or $1$, and so $\tau_1(\zot)\subseteq\{0,12\}^\mathbb{Z}$ (which is here considered a subshift of $\{0,1,2\}^\mathbb{Z}$). Let $D=\{(0,0),(0,1)\}$ and $x\in st(\zot)$ (recall that in our drawings time, that is $y$-coordinate, increases downwards). If $x_{D},x_{D+(0,2)}\in\{00,12\}$, then $x_{D+(1,1)}=x_D\oplus x_{D+(0,2)}$ where $00$ is interpreted as $0$ and $12$ as $1$, and $\oplus$ is addition modulo two (Figure \ref{fig:201}). Notice also that $201$ can not appear in the trace of $\zot$ since only $\rho_1$ maps $0$ to $1$ but $\pi_1(0)\neq 2$. Together these imply that $\tau_1(\zot)\subseteq\{00,12\}^\ZZ$ since applying the modulo two addition $k$ times to $20^{2k+1}1$ leads to an appearance of $201$ (Figure \ref{fig:odd}). On the other hand we can use the modulo two addition to construct a valid space-time diagram for any $t\in\{00,12\}^\ZZ$ (Figure \ref{fig:xorlike}).

We have seen that $\tau_1(\zot)=\{00,12\}^\mathbb{Z}$, and so $h(A^\NN,\zot)=\frac{1}{2}$.
\end{proof}

Using the direct product construction presented in the end of Section \ref{Preliminaries} to $\zot$ to construct $\zot\times\zot\times\cdots\times\zot$ we can obtain a one-sided reversible cellular automaton that has radius one, whose inverse also has radius one, and that has arbitrarily high entropy.

\begin{figure}
\centering
\begin{minipage}{.30\textwidth}
  \centering
	\begin{tikzpicture}
		\draw[dashed] (0,1) -- ++(1,0);
		\draw[dashed] (1,2) -- ++(1,0);
		\draw[dashed] (0,3) -- ++(1,0);
		\draw (0,0) rectangle node {\large$\boldsymbol b$} +(1,2);
		\draw (0,2) rectangle node {\large $\boldsymbol a$} +(1,2);
		\draw (1,1) rectangle node {\large $\boldsymbol c$} +(1,2);
	\end{tikzpicture}
  \caption{If the pattern formed by $a,b$ and $c$ is a valid pattern in $st(\zot)$ and two of the three $a,b,c$ are in $\{00,12\}$, then so is the third and it is determined by $a\oplus b=c$ where we identify $00$ with $0$ and $12$ with $1$, and $\oplus$ is addition modulo two. This can be verified by simply trying all possible values.}
  \label{fig:201}
\end{minipage}%
\hfill
\begin{minipage}{.30\textwidth}
  \centering
	\begin{tikzpicture}
		\foreach \i/\label in {0/2,1/1,2/0,3/0,4/0,5/0,6/0,7/2,8/1}{
			\draw (0,0.5*\i) rectangle node {$\label$} ++(0.5,0.5);
		}
		\foreach \i/\label in {0/2,1/1,2/0,3/0,4/0,5/2,6/1}{
			\draw (0.5,0.5+0.5*\i) rectangle node {$\label$} ++(0.5,0.5);
		}
		\foreach \i/\label in {0/2,1/1,2/0,3/2,4/1}{
			\draw (1,1+0.5*\i) rectangle node {$\label$} ++(0.5,0.5);
		}
	\end{tikzpicture}
  \caption{The traces of $\zot$ cannot have a word $20^n1$ where $n$ is odd. This can be seen by applying the  modulo two addition as described in Figure \ref{fig:201}, which leads to the word $201$, which is invalid since since only $\rho_1(0)=1$ but $\pi_1(0)\neq 2$.}
  \label{fig:odd}
\end{minipage}%
\hfill
\begin{minipage}{.30\textwidth}
  \centering
	\begin{tikzpicture}
		\foreach \i in {0,1,2,3}{
			\draw (0,\i) rectangle ++(0.5,1);
		}
		\foreach \i in {0,1,2}{
			\draw (0.5,\i+0.5) rectangle ++(0.5,1);
		}
		\foreach \i in {0,1}{
			\draw (1,1+\i) rectangle ++(0.5,1);
		}
		\draw (1.5,1.5) rectangle ++(0.5,1);
		\foreach \i/\label in {0/0,1/0,2/0,3/0,4/2,5/1,6/2,7/1}{
			\node at (0.25,0.5*\i+0.25) {$\label$};
		}
		\foreach \i/\label in {0/0,1/0,2/2,3/1,4/0,5/0}{
			\node at (0.75,0.75+0.5*\i) {$\label$};
		}
		\foreach \i/\label in {0/2,1/1,2/2,3/1}{
			\node at (1.25,1.25+0.5*\i) {$\label$};
		}
		\foreach \i/\label in {0/0,1/0}{
			\node at (1.75,1.75+0.5*\i) {$\label$};
		}
	\end{tikzpicture}
  \caption{Fill the leftmost column in an arbitrary way using the blocks $00$ and $12$. Fill the next column as described in Figure \ref{fig:201}. Notice that we get no violations of the local rule of $F$ doing this. Repeat. }
  \label{fig:xorlike}
\end{minipage}
\end{figure}

It turns out that if we restrict $\zot$ to finite words in $\{0,1,2\}^n$ in such a way that in the last cell, in which we can not use the local rule of $\zot$, we will instead always use $\rho_1$, then this defines a cyclic permutation of $\{0,1,2\}^n$. This gives an unconventional way to enumerate finite words.

\begin{proposition}
\label{enumeration}
Let $A=\{0,1,2\}$ and $(A^\NN,\zot)$ be the cellular automaton as above. Let $\rho$ be the map
\begin{align*}
\bigcup_{n\in\NN\setminus\{0\}}A^n&\stackrel{\rho}{\longrightarrow} \bigcup_{n\in\NN\setminus\{0\}}A^n\\
a_0\cdots a_{n-1}&\longmapsto \rho_{a_1}(a_0)\cdots \rho_1(a_{n-1}).
\end{align*}
Then for every $n\in\NN\setminus\{0\}$ the restriction $\rho:A^n\rightarrow A^n$ is a cyclic permutation.
\end{proposition}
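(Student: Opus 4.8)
The plan is to verify first that $\rho$ is a bijection and then to prove by induction on $n$ that the permutation $\rho\colon A^{n}\to A^{n}$ of the $3^{n}$-element set $A^{n}$ is a single cycle. Bijectivity is immediate: from $v=\rho(u)$ one recovers $u$ letter by letter from the right, since $v_{n-1}=\rho_{1}(u_{n-1})$ determines $u_{n-1}$, and $v_{i}=\rho_{u_{i+1}}(u_{i})$ determines $u_{i}$ once $u_{i+1}$ is known, each $\rho_{a}$ being a permutation of $A$. For $n=1$ we have $\rho=\rho_{1}=(0\,1\,2)$, a $3$-cycle.

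For the inductive step I would use that deleting the first letter is a factor map $\pi\colon (A^{n+1},\rho)\to(A^{n},\rho)$ and that $(A^{n+1},\rho)$ is the skew product $\rho(a,w)=\bigl(\rho_{w_{0}}(a),\rho(w)\bigr)$ over the base $(A^{n},\rho)$ with cocycle $w\mapsto\rho_{w_{0}}\in S_{\{0,1,2\}}$. By the induction hypothesis the base is a single $3^{n}$-cycle, so the dynamics upstairs is governed by the return cocycle
\[ G=\rho_{(\rho^{3^{n}-1}(w))_{0}}\circ\cdots\circ\rho_{(\rho(w))_{0}}\circ\rho_{w_{0}}\in S_{\{0,1,2\}}, \]
whose conjugacy class is independent of $w$ (one has $G_{\rho(w)}=\rho_{w_{0}}G_{w}\rho_{w_{0}}^{-1}$ and all $w$ lie in one $\rho$-orbit): the orbit of $(a,w)$ has length $3^{n}\cdot\mathrm{ord}(G)$, so $(A^{n+1},\rho)$ is a single $3^{n+1}$-cycle when $G$ is a $3$-cycle and is a disjoint union of three $3^{n}$-cycles when $G=\mathrm{id}$. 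Since the trajectory $(\rho^{j}(w))_{0}$, $0\le j<3^{n}$, visits every word of $A^{n}$ exactly once, it takes the value $1$ exactly $3^{n-1}$ times, so $\mathrm{sign}(G)=(-1)^{2\cdot 3^{n-1}}=1$; hence $G$ lies in the alternating group $A_{3}$ and in particular is not a transposition. It therefore remains only to rule out $G=\mathrm{id}$.

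To handle $G$ I would switch to the affine description $\rho_{1}\colon x\mapsto x+1$ and $\rho_{0}=\rho_{2}\colon x\mapsto-x$ on $A=\mathbb{Z}/3$; then $G$ is an affine map, and since it lies in $A_{3}$ it has the form $x\mapsto x+B$ for a single $B\in\mathbb{Z}/3$. Taking $w=0^{n}$ and writing $b_{j}=(\rho^{j}(0^{n}))_{0}$ for the first-coordinate trajectory, composing the affine maps gives $B=\sum(-1)^{m_{j}}$, the sum over the $3^{n-1}$ indices $j$ with $b_{j}=1$, where $m_{j}$ is the number of entries of $b$ differing from $1$ that lie strictly to the right of position $j$. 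So it suffices to show $B\not\equiv0\pmod 3$. For this one analyses the word $b=b^{(n)}$: the local rule forces every $1$ in $b$ to be followed by a $2$, so $b$ is a concatenation of maximal blocks of the forms $0^{c}$ and $(12)^{k}$; comparing the orbits of $0^{n}$ and $0^{n+1}$ and using the induction hypothesis, the $0$-block lengths of $b^{(n)}$ are precisely the (cyclic) gaps between successive occurrences of $1$ in $b^{(n-1)}$. Feeding this back, only the $(12)$-blocks of odd length contribute to $B$, the number of such blocks is odd because their lengths sum to $3^{n-1}$, and a parity analysis of the block lengths then gives $B\equiv(-1)^{n}\not\equiv0\pmod 3$, completing the induction.

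The genuinely hard part is this last step. The soft ingredients — bijectivity, the skew-product reduction, and the sign computation excluding transpositions — are routine, but proving $G\ne\mathrm{id}$ requires real control over the combinatorics of the trajectory $b^{(n)}$, especially the parities of its run lengths and the bookkeeping for the block that wraps around the period boundary. In practice it is cleanest to run one simultaneous induction that establishes ``$\rho\colon A^{n}\to A^{n}$ is a single $3^{n}$-cycle'' together with the structural statements about $b^{(n)}$ that the parity argument consumes.
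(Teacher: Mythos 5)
Your setup is sound and in one respect genuinely improves on the paper's argument: the bijectivity check, the reduction of the inductive step to the first-coordinate return map $G$ over one $3^{n}$-cycle of the base, and the observation that $G$ is a product of $3^{n-1}$ copies of the even permutation $\rho_{1}$ and $2\cdot 3^{n-1}$ transpositions, hence lies in $A_{3}$, are all correct; so is the affine formula $B=\sum_{j:b_{j}=1}(-1)^{m_{j}}$. (Minor slip: the orbit length of $(a,w)$ is $3^{n}$ times the length of the $G$-cycle through $a$, not $3^{n}\cdot\mathrm{ord}(G)$; this is harmless once transpositions are excluded.) The sign argument means you only need $G\neq\mathrm{id}$, whereas the paper computes two values of $G$ explicitly. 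The problem is that everything you actually argue stops short of the one fact that carries the proposition, namely $B\not\equiv 0\pmod 3$. The remark that the number of $(12)$-blocks of odd half-length is odd only shows that $B$ is an odd integer, and an odd integer can perfectly well be divisible by $3$: already for $n=3$ there are three contributing blocks, so a priori $B\in\{\pm 1,\pm 3\}$ and the individual signs $(-1)^{M}$ must be tracked. The ``parity analysis of the block lengths'' that is supposed to yield $B\equiv(-1)^{n}$ is precisely the hard combinatorial core, and it is not carried out.

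Moreover, the structural input that analysis would rest on is itself unproven. The claim that the $0$-block lengths of $b^{(n)}$ are the cyclic gaps between consecutive $1$'s of $b^{(n-1)}$ checks out in small cases, but it does not follow from the naive mechanism ``phase changes happen at driver-$1$ positions'': a $1$ in the driver aligned with a $1$ of $b^{(n)}$ causes no phase change, since $\rho_{a}(1)=2$ for every $a$, so one must control which driver-$1$'s land on $0$- or $2$-positions. This is exactly where the paper does its real work: it proves the forbidden-word lemma (no $20^{k}1$ with $k$ odd in the traces), deduces that the driver column has the form $0^{n}t_{1}'$ with $t_{1}'\in\{00,12\}^{+}$ starting with $1$ and ending with $2$, counts its $3^{n-1}$ ones, and tracks the resulting swaps between the $00$- and $12$-streams of the new column, with the final value pinned down by the parity of $n$. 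You would need to supply an argument of comparable precision --- the ``simultaneous induction'' you allude to --- before your affine bookkeeping can be evaluated. As written, the proof that $G\neq\mathrm{id}$ is missing, and with it the proof of the proposition.
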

\begin{proof}

Proof goes by induction on $n$: For $n=1$ the claim is clear as $\rho_1$ is a cyclic permutation of $A$.

Suppose that the claim holds for some $n\in\NN\setminus\{0\}$. It is enough to show that the map $x\mapsto\rho^{3^n}(x0^n)_0$ is a cyclic permutation of $A$.

Notice that in $(\rho^i(x0^n))_{i\in\ZZ}$ every occurrence of the pattern in Figure \ref{fig:201} is valid, i.e. does not violate the local rule of $\zot$. Clearly for every $j\in\{1,\dots,n\}$ the configuration $t_j=(\rho^{i}(x0^n)_j)_{i\in\ZZ}$ is periodic. Applying $\rho$ to $0^{n-j+1}$ until we return back to $0^{n-j+1}$ it is straight forward to see that a word of form $0^{n-j+1}t_j'0^{n-j}$, for some finite word $t_j'$, is a smallest period in $t_j$. We claim that the words $t_j'$ cannot contain words $20^k1$ where $k$ is odd. Suppose that such a word does exist. If $k\geq 2(n-j)+1$ then using the argument in Figure \ref{fig:201} we see that this was not the smallest period as we have returned to $0^{n-j+1}$. If $k<2(n-j)+1$ then using the same argument  we reach a contradiction with the observation that the patterns of Figure \ref{fig:201} are valid in $(\rho^i(x0^n))_{i\in\ZZ}$. It is also clear that $t_j'$ starts with a $1$ and ends with a $2$.

Now let $x=0$. Let $t=(\rho^{i}(0^{n+1})_0)_{i\in\{0,1,\dots,3^n-n-1\}}$. By the observations of the previous paragraph we know that the word to the right of $t$ is $0^nt_1'$ where $t_1'\in\{00,12\}^+$. Then $t$ is determined by repeatedly applying the reasoning in Figure \ref{fig:201}. By the induction hypothesis we know that all words of length $n$ appear exactly once in $(\rho^{i}(0^n))_{i\in\{0,1,\dots,3^n-1\}}$. This means that in the right side of $t$ there will be a $1$ exactly $3^{n-1}$ times. In particular this is an odd number of times. Every time we see a $1$ the modulo two addition will swap $t$ between a stream of $00$'s and a stream of $12$'s. Since we start with $00$ and swap for odd number of times, we must end in $12$. So we have that if $n$ is even, then $\rho^{3^n}(00^n)_0=1$, and if $n$ is odd, then $\rho^{3^n}(00^n)_0=2$. It is now enough that we show that if $n$ is even, then $\rho^{3^n}(10^n)_0=2$, and if $n$ is odd, then $\rho^{3^n}(20^n)_0=1$.

Suppose that $n$ is even and $x=1$. The only way the stream of $12$'s in $t$ (defined as before) swaps to a stream of $00$'s is if $2$ in $t$ is aligned with a $1$ in $t_1'$. Before $t_1'$ starts we have $n$ times a $0$ which map $1$ to $2$ and $2$ back to $1$. As we noted, $t_1'\in\{00,12\}^+$ and begins with $1$, so since $n$ is even we see that $2$ is never aligned with $1$ and thus $t$ will only swap between $1$ and $2$ for $3^n$ steps. Since we do this odd number of times, we end up with a $2$.

The case $n$ is odd and $x=2$ goes similarly.



\end{proof}

\section{One-dimensional case}
\label{One-dimensional case}

In \cite{Epperlein17} it was proved that the conjugacy of periodic one-dimensional cellular automata is decidable. They also conjectured that for general one-dimensional cellular automata conjugacy is undecidable. We prove this conjecture. Actually we prove a result that is stronger in a couple of ways: We prove a recursive inseparability result which immediately implies that conjugacy, being a factor, being a subsystem, and the strong variants of all of these are undecidable for both one- and two-sided one-dimensional cellular automata. After the main result we will mention some related problems. These results appeared in the conference paper \cite{JalonenKari18}.

\subsection{Conjugacy of one-dimensional cellular automata}

Our proof relies on the following result.

\begin{theorem}{(\cite{Kari92},\cite{Lewis79})}
\label{nilpotency}
Nilpotency of one-dimensional one-sided cellular automata with a spreading state and radius $1$ is undecidable.
\end{theorem}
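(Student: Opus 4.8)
The plan is to reduce from the undecidability of the Wang tiling (domino) problem, exploiting Proposition \ref{spacetime-diagram-without-spreading-state}, which already packages the necessary compactness argument into the clean statement that a non-nilpotent cellular automaton with a spreading state must admit a space-time diagram entirely free of that state. Concretely, I would start from a finite set of Wang tiles $\tau$ and build a one-sided, radius-$1$ cellular automaton $F$ over the alphabet $A=\tau\cup\{s\}$, where $s$ is a fresh \emph{spreading} state, arranged so that the $s$-free space-time diagrams of $F$ correspond exactly to valid tilings. The key geometric idea is to read a tiling of the plane along its anti-diagonals: if each tile is determined by the colors of its north and west edges (an \emph{NW-deterministic} tile set), then for a tile at $(x,y)$ on the anti-diagonal $\{x+y=k\}$ its west neighbor $(x-1,y)$ and north neighbor $(x,y-1)$ are two \emph{adjacent} tiles on the anti-diagonal $\{x+y=k-1\}$. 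Hence each anti-diagonal is determined, cell by cell, by adjacent pairs from the previous one — which is precisely a radius-$1$ local rule, with one CA time step advancing the tiling by one anti-diagonal.

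Using the convention that the neighborhood of $F$ is $\{0,1\}$, I would define $F(c)_j$ as follows. If $c_j=s$ or $c_{j+1}=s$, output $s$; otherwise $c_j,c_{j+1}\in\tau$ are two adjacent tiles of the current anti-diagonal, and NW-determinism dictates a unique candidate for the tile they induce on the next anti-diagonal. If this candidate exists and all induced edge-matching constraints hold, output it; otherwise output $s$. By construction $s$ is quiescent and every neighborhood containing $s$ maps to $s$, so $s$ is spreading in the sense of the excerpt, while $F$ is one-sided with radius $1$ and has no boundary issue (its neighborhood only looks rightward). The determinism of $\tau$ is exactly what makes $F$ a genuine single-valued map, which is the whole reason for passing to NW-deterministic tile sets.

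With this in hand the reduction closes quickly. An $s$-free element of $st(F)$ assigns tiles to a two-dimensional region with every local transition satisfied, i.e. with no edge mismatch anywhere; reading it back along anti-diagonals yields a valid tiling of a region containing arbitrarily large squares, hence by compactness the whole plane is tileable by $\tau$, and conversely a planar tiling restricts to such an $s$-free diagram. Thus $F$ admits an $s$-free space-time diagram if and only if $\tau$ tiles the plane, and Proposition \ref{spacetime-diagram-without-spreading-state} upgrades this to the statement that $F$ is non-nilpotent if and only if $\tau$ tiles the plane; equivalently, $F$ is nilpotent precisely when $\tau$ fails to tile. Any decision procedure for nilpotency would then decide the tiling problem, giving the theorem, and the $F$ produced is by design one-sided, of radius $1$, and equipped with a spreading state, matching the stated hypotheses. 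The main obstacle — and the technical heart of the argument — will be establishing that the tiling problem remains undecidable when restricted to NW-deterministic tile sets, since an arbitrary Wang set need not be deterministic and a non-deterministic set would not yield a well-defined $F$; this deterministic strengthening of the classical undecidability of tiling is exactly what \cite{Kari92} supplies (and an alternative route reducing instead from the immortality problem of Turing machines is also available). A secondary point to check carefully is the orientation bookkeeping of the anti-diagonal encoding, so that the induced neighborhood is genuinely $\{0,1\}$ with $s$ spreading leftward.
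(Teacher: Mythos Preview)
The paper does not supply its own proof of Theorem~\ref{nilpotency}; it is quoted as a known result with citations to \cite{Kari92} and \cite{Lewis79}, and is used only as a black box in the proof of Theorem~\ref{inseparablePairs}. So there is no proof in the paper to compare against.

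That said, your sketch is the classical argument behind those references and is essentially correct: encode a deterministic Wang tile set into a radius-$1$ local rule via the anti-diagonal reading, add a fresh spreading state $s$ for mismatches, and invoke Proposition~\ref{spacetime-diagram-without-spreading-state} to equate non-nilpotency with the existence of an $s$-free space-time diagram, hence with tileability of the plane. You correctly identify the real technical content as the undecidability of tiling for \emph{deterministic} tile sets; note that this is what the citation to \cite{Lewis79} is for, while \cite{Kari92} is the paper that carries out the reduction to nilpotency you describe.

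One point deserves more care than you give it. With NW-determinism and the natural anti-diagonal parametrization, the induced neighborhood comes out as $\{-1,0\}$ rather than $\{0,1\}$, so for a genuinely one-sided CA on $A^{\NN}$ you must either flip the diagonal orientation (equivalently, use an NE- or SW-deterministic tile set) or argue separately that for a local rule with neighborhood contained in $\NN$ the two-sided and one-sided nilpotency problems coincide. The latter is easy: for such a rule, $F^n(c)_j$ depends only on $c_j,\dots,c_{j+n}$, so $F$ is nilpotent on $A^{\ZZ}$ iff it is nilpotent on $A^{\NN}$, and one may then run the standard two-sided argument. Either fix closes the gap you flagged in your final sentence.
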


We are ready to prove the first main result of this paper.

\begin{theorem}
\label{inseparablePairs}
The following two sets of pairs of one-dimensional one-sided cellular automata are recursively inseparable:
\begin{enumerate}[(i)]
\item pairs where the first cellular automaton has strictly higher entropy than the second one, and
\item pairs that are strongly conjugate and both have zero topological entropy.
\end{enumerate}
\end{theorem}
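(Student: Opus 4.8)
The plan is to reduce from nilpotency of one-sided radius-$1$ cellular automata with a spreading state (Theorem~\ref{nilpotency}). Given such a cellular automaton $N:B^\NN\to B^\NN$ with spreading state $s\in B$, I will effectively construct a pair $(F,G)$ of one-dimensional one-sided cellular automata with the property that if $N$ is nilpotent then $F\scong G$ and both have zero entropy, while if $N$ is not nilpotent then $h(F)>h(G)$. Since nilpotency is undecidable, no recursive set can separate set (i) from set (ii): any such separating set would, via this reduction, decide nilpotency. The key design idea is to build $F$ so that it behaves like a ``harmless'' zero-entropy system (e.g.\ the identity, or something strongly conjugate to a fixed target) as long as it never sees the spreading state $s$, but so that the appearance of $s$ unlocks extra dynamical complexity — concretely, a copy of the positive-entropy reversible automaton $\zot$ from Proposition~\ref{entropy of 012 CA} (or a product of copies of it, to beat any fixed entropy bound). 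I would let $G$ be the ``target'' zero-entropy automaton, fixed independently of $N$.

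The main steps, in order: (1) Fix the target $G$ — the simplest choice is to take $G$ on the same alphabet as $F$ acting as the identity (entropy $0$), or some equally trivial zero-entropy map; then "strongly conjugate to $G$" just means "strongly conjugate to the identity on that alphabet." (2) Build $F$ on an alphabet that carries a "$B$-track" running $N$ together with an auxiliary "$\zot$-track" (states $\{0,1,2\}$, or several such tracks). The $\zot$-track is forced to be trivial (say frozen to $0^\omega$, contributing no entropy) at every cell unless the spreading state $s$ has already swept over that cell in the $N$-track; once $s$ arrives, the $\zot$-track is activated and thereafter evolves by $\zot$. Because $s$ is spreading, activation is monotone in space and time, which keeps $F$ well-defined and shift-commuting with radius $1$ (after the standard grouping of cells if needed). (3) Analyze the two cases. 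If $N$ is nilpotent, then by Proposition~\ref{uniformlyNilpotent} it is uniformly nilpotent; I will arrange that the post-nilpotent configuration on the $B$-track does not contain $s$ (e.g.\ take the quiescent state $q\neq s$ — if $N$'s spreading state is its quiescent limit one instead uses a separate trivial track to record "has $s$ ever appeared"), so the $\zot$-track is never activated on the eventual image, and more carefully: since on \emph{every} orbit the $\zot$-track stays frozen unless $s$ appears, and one needs it frozen on the whole orbit, I would instead use the enumeration/cyclic-permutation structure of $\zot$ from Proposition~\ref{enumeration} so that the full system on any orbit is conjugate (by a shift-commuting relabeling) to the identity; in any case the point is that nilpotency forces $F$ to be strongly conjugate to $G$ and forces $h(F)=0$. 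If $N$ is not nilpotent, then by Proposition~\ref{spacetime-diagram-without-spreading-state} there is a configuration whose orbit never produces $s$, but more importantly there are orbits where $s$ \emph{does} appear and sweeps an unbounded region, so the $\zot$-track runs $\zot$ on arbitrarily large space-time windows; by Proposition~\ref{entropy of 012 CA} (and the product construction after it) this forces $h(F)\ge$ whatever positive value I built in, in particular $h(F)>0=h(G)$. (4) Conclude inseparability.

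The delicate point — and where I expect the real work to be — is the nilpotent case: I must ensure that when $N$ is nilpotent, $F$ is not merely of zero entropy but genuinely \emph{strongly conjugate} to the fixed $G$, uniformly in $N$. Freezing the $\zot$-track to $0^\omega$ only works if $s$ never appears on \emph{any} orbit, which fails in general even for nilpotent $N$ (the spreading state typically does appear before the system dies). The fix is to exploit that $\zot$ restricted to finite activated windows is, by Proposition~\ref{enumeration}, conjugate to a cyclic permutation / odometer-like map, and that a cellular automaton which is "eventually a fixed finite permutation on each finite coordinate" is strongly conjugate to the identity via a computable, shift-commuting recoding; making this recoding work cleanly at the activation boundary (where the $\zot$-track switches on) is the technical heart of the construction. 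A secondary obstacle is getting $F$ and $G$ onto the \emph{same} alphabet and keeping radius $1$ — both handled by the standard block/grouping trick, which preserves strong conjugacy type and only rescales entropy by a constant, so it does not affect whether $h(F)>h(G)$ or $h(F)=0$.
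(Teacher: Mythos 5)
Your overall reduction framework (from nilpotency, Theorem~\ref{nilpotency}, producing a pair that lands in set (ii) when $N$ is nilpotent and in set (i) otherwise, using $\zot$ to generate entropy) is the same as the paper's, but two design choices in your construction are fatal, and the first cannot be repaired by the fix you sketch. You activate the $\zot$-track \emph{once the spreading state $s$ has appeared}. But the configuration whose $B$-track is constantly $s$ always exists, is fixed by $N$, and activates every cell immediately; on it your $\zot$-track runs $\zot$ forever on the whole lattice, so $h(F)\geq h(\zot)=\tfrac12>0$ \emph{regardless of whether $N$ is nilpotent}. Since entropy is a conjugacy invariant (Proposition~\ref{entropy-of-subsystem-and-factor}), $F$ can then never be strongly conjugate to a zero-entropy $G$, so your case (ii) fails outright, and no recoding via the cyclic-permutation structure of Proposition~\ref{enumeration} (which concerns finite words, not the full lattice) can rescue it. The paper's construction reverses the trigger: the $A$-track runs $\zot_{2k}$ \emph{by default} and is frozen exactly at cells whose $B$-track is about to become $q$. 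Then nilpotency, which by Proposition~\ref{uniformlyNilpotent} is uniform, freezes the entire $A$-track after a fixed number $n$ of steps, and the map $\phi$ with $\pi_A\phi=\pi_A\mathcal{F}^n$, $\pi_B\phi=\pi_B$ is the desired strong conjugacy; non-nilpotency gives, via Proposition~\ref{spacetime-diagram-without-spreading-state}, an orbit on which the $A$-track runs $\zot_{2k}$ forever, yielding $h(\mathcal{F})\geq k>\log_2\lvert B\rvert\geq h(\mathcal{G})$.

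The second problem is your choice of $G$ as the identity (or any fixed bijection). Your $F$ carries the dynamics of $N$ on its $B$-track; when $N$ is nilpotent and nontrivial it is not injective, hence neither is $F$, and a non-injective map cannot be conjugate to the identity. This is why the paper takes $\mathcal{G}=\id_A\times H$, so that both automata share the same (non-injective) $B$-track dynamics and the conjugacy can act as the identity there. Your closing remark that grouping cells ``preserves strong conjugacy type'' also needs care: regrouping changes the shift action and in general only preserves conjugacy, not strong conjugacy, which is why the paper works directly with radius-$1$ data throughout.
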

\begin{proof}
We will reduce the decision problem of Theorem \ref{nilpotency} to this problem, which will prove our claim.

Let $H:B^\mathbb{N}\rightarrow B^\mathbb{N}$ be an arbitrary given one-sided cellular automaton with neighborhood radius $1$ and a spreading quiescent state $q\in B$. Let $k\in\mathbb{N}$ be such that $k>\log_2(\lvert B\rvert)$, $\zot_{2k}$ be the $2k$-fold cartesian product of the cellular automaton $\zot$ of Example \ref{021example}, and $A=\{0,1,2\}^{2k}$. This choice is done to have high enough entropy down the line. Now we are ready to define cellular automata $\mathcal{F}$ and $\mathcal{G}$ such that
\begin{align*}
H\text{ is not nilpotent}&\implies h(\mathcal{F})>h(\mathcal{G})\\
H\text{ is nilpotent}&\implies \mathcal{F}\scong \mathcal{G}\text{ and }h(\mathcal{F})=h(\mathcal{G})=0.
\end{align*}
Both of these new cellular automata work on two tracks $\mathcal{F},\mathcal{G}:\left(A\times B\right)^\mathbb{N}\rightarrow \left(A\times B\right)^\mathbb{N}$. The cellular automaton $\mathcal{G}$ is simply $\id_A\times H$, i.e.
\[
\mathcal{G}((a_0,b_0)(a_1,b_1))=(a_0,H(b_0 b_1)),
\]
for all $a_0,a_1\in A,\;b_0,b_1\in B$. The cellular automaton $\mathcal{F}$ also acts as $H$ on the $B$-track. On the $A$-track $\mathcal{F}$ acts as $\zot_{2k}$ when the $B$-track is not going to become $q$, and as $\id_A$ when the $B$-track is going to become $q$, i.e. 
\[
\mathcal{F}((a_0,b_0)(a_1,b_1))=
\begin{cases}
(\zot_{2k}(a_0a_1),H(b_0b_1)),&\text{if } H(b_0b_1)\neq q\\
(a_0,H(b_0b_1)),&\text{if } H(b_0b_1)=q,
\end{cases}
\]
for all $a_0,a_1\in A,\;b_0,b_1\in B$. 

\textbf{(i)} Suppose that $H$ is not nilpotent. Then the entropy of $\mathcal{G}$ is
\[
h\big(\left(A\times B\right)^\mathbb{N},\mathcal{G}\big)=h\left(A^\mathbb{N},\id_A\right)+h\left(B^\mathbb{N},H\right)=h\left(B^\mathbb{N},H\right),
\]
since $\mathcal{G}=\id_A\times H$. On the other hand, by Proposition \ref{spacetime-diagram-without-spreading-state}, there exists a configuration $e\in B^\mathbb{Z}$ such that for all $i,j\in\mathbb{N}$ we have that $H^i(c)_j\neq q$. But then we have that
\[
h\big(\left(A\times B\right)^\mathbb{N},\mathcal{F}\big)\geq h\left(A^\mathbb{N},\zot_{2k}\right)>\log_2( \lvert B\rvert)\geq h\big(B^\mathbb{N},H\big),
\]
according to Example \ref{021example} and the choice of $k$. Overall we have that
\[
h\big(\left(A\times B\right)^\mathbb{N},\mathcal{F}\big)>h\big(\left(A\times B\right)^\mathbb{N},\mathcal{G}\big),
\]
as was claimed.


\textbf{(ii)} Suppose that $H$ is nilpotent. Let us first explain informally why we now have that $\mathcal{F}\scong \mathcal{G}$. Both $\mathcal{F}$ and $\mathcal{G}$ behave identically on the $B$-track, so the conjugacy will map this track simply by identity. Nilpotency of $H$ guarantees that for all configurations the $B$-track will be $q^\omega$ after some constant time $n$ (Proposition \ref{uniformlyNilpotent}). By the definition of $\mathcal{F}$ this means that after $n$ steps $\mathcal{F}$ does nothing on the $A$-track. Since $\mathcal{G}$ never does anything on the $A$-track, we can use this fact to define the conjugacy on the $A$-track simply with $\mathcal{F}^n$. That this is in fact a conjugacy follows since $\mathcal{F}$ is, informally, reversible on the $A$-layer for a fixed $B$-layer.

Let us be exact. First we will define a continuous map $\phi:(A\times B)^\mathbb{N}\rightarrow (A\times B)^\mathbb{N}$ such that $\phi \mathcal{F} =\mathcal{G} \phi$. This $\phi$ will be a cellular automaton. Then we show that $\phi$ is injective, which implies reversibility (by \cite{Hedlund69}), and so we will have $((A\times B)^\NN,\mathcal{F})\scong ((A\times B)^\NN,\mathcal{G})$.

Let $\pi_A:A^\mathbb{N}\times B^\mathbb{N}\rightarrow A^\mathbb{N}$ be the projection $\pi_A(c,e)=c$ for all $c\in A^\mathbb{N}$ and $e\in B^\mathbb{N}$. Define $\pi_B:A^\mathbb{N}\times B^\mathbb{N}\rightarrow B^\mathbb{N}$ similarly.

Let $n\in\mathbb{N}$ be a number such that for all $c\in B^\mathbb{N}$ we have $H^n(c)=q^\omega$. Such $n$ exists according to Proposition \ref{uniformlyNilpotent}, since $H$ is nilpotent. Because $\mathcal{F}$ and $\mathcal{G}$ act identically on the $B$-track, $\phi$ will map this layer simply by identity, i.e.
\[
\pi_B \phi(c,e)=e,
\]
for all $c\in A^\mathbb{N},e\in B^\mathbb{N}$. After $n$ steps $\mathcal{F}$ does nothing on the $A$-track, i.e. acts the same way as $\mathcal{G}$ does. Because of this we define
\[
\pi_A\phi = \pi_A {\mathcal{F}}^n.
\]
Now $\phi$ is a cellular automaton, since it is continuous and shift-commuting. Let us show that $\phi$ is a homomorphism. Of course we have that
\[
\phi \mathcal{F} = \mathcal{G} \phi \iff \big(\pi_A\phi \mathcal{F} = \pi_A \mathcal{G}\phi\text{ and } \pi_B\phi \mathcal{F} = \pi_B \mathcal{G}\phi\big).
\]
It is immediate from the definitions that $\pi_B\phi \mathcal{F} = \pi_B \mathcal{G}\phi$. For the equality on the $A$-layer notice first that $\pi_A\mathcal{G}=\pi_A$, and then compute:
\begin{align*}
\pi_A\phi \mathcal{F} \;&\stackrel{\mathmakebox[\widthof{=}]{\text{def.}}}{=}\; (\pi_A {\mathcal{F}}^n)\mathcal{F}\\
&=\;\pi_A \mathcal{F}{\mathcal{F}}^n & &\mid\mid\text{ after }n\text{ steps } \mathcal{F}\\
&=\;\pi_A \mathcal{G} {\mathcal{F}}^n & &\quad\;\;\text{behaves as }\mathcal{G}\\
&=\;\pi_A {\mathcal{F}}^n\\
&\stackrel{\mathmakebox[\widthof{=}]{\text{def.}}}{=}\;\pi_A\phi\\
&=\;\pi_A \mathcal{G}\phi.
\end{align*}
So we have that $\phi \mathcal{F} = \mathcal{G} \phi$.

To prove that $\phi$ is a strong \emph{conjugacy} it is enough to show that $\phi$ is an injection. As the $B$-layer is mapped by identity, we only need to show that for a fixed $e\in B^\mathbb{N}$ we have that for all $c\in A^\mathbb{N}$ there exists a unique $c'\in A^\mathbb{N}$ such that $\phi(c',e)=(c,e)$. By the definition of $\phi$ this will hold if
\begin{align*}
\pi_A {\mathcal{F}}^n(\_,e):A^\mathbb{N}&\longrightarrow A^\mathbb{N}\\
c\;\;&\longmapsto \pi_A {\mathcal{F}}^n(c,e)
\end{align*}
is a bijection for every $e\in B^\mathbb{N}$. We can consider this step by step. We claim that
\[
(c,e)=(c_0c_1c_2\dots,e_0e_1e_2\dots)\in(A\times B)^\mathbb{N}
\]
uniquely defines the $A$-track of the elements in the set $\mathcal{F}^{-1}(c,e)$. Let $(c',e')=(c_0'c_1'c_2'\dots,e_0'e_1'e_2'\dots)\in\mathcal{F}^{-1}(c,e)$. It is enough to show that $c_0'$ is defined uniquely by $(c,e)$. Suppose first that $e_0=q$. Then according to the definition $\mathcal{F}$ acted as identity, so we have that $c_0'=c_0$. Suppose next that $e_0\neq q$. We have two cases: either $e_1=q$ or not. Suppose first that $e_1=q$. Then as before we have that $c_1'=c_1$. And so $c_0'=\rho_{c_1'}^{-1}(c_0)=\rho_{c_1}^{-1}(c_0)$. And lastly suppose that $e_1\neq q$. Then we have that $\zot_{2n}(c_0'c_1'c_2'\dots)=(c_0c_1\dots)$ according to the definition of $\mathcal{F}$. But now $c_0'$ is uniquely determined since $\zot_{2k}$ is reversible and the inverse also has radius $1$, namely we have that $c_0'=\zot_{2k}^{-1}(c_0c_1)$.

To complete the proof we observe that
\[
h(\mathcal{F})=h(\mathcal{G})=h(\id_A)+h(H)=0,
\]
since $\mathcal{F}\scong \mathcal{G}=\id_A\times H$, and $H$ is nilpotent.

\end{proof}

Since Theorem \ref{inseparablePairs} can be reduced to the two-sided case, also the two-sided variant is undecidable. We also get the following corollary.

\begin{corollary}
\label{undecidableProperties}
Let $\mathbb{M}=\mathbb{N}$ or $\mathbb{M}=\mathbb{Z}$. Let $F,G:A^\mathbb{M}\rightarrow A^\mathbb{M}$ be two given cellular automata. Then the following hold:
\begin{enumerate}
\item It is undecidable whether $F$ and $G$ are (strongly) conjugate.
\item It is undecidable whether $F$ is a (strong) factor of $G$.
\item It is undecidable whether $F$ is a (strong) subsystem of $G$.
\end{enumerate}
\end{corollary}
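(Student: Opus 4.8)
The plan is to reduce the inseparability result of Theorem~\ref{inseparablePairs} to each of the three decision problems, using Proposition~\ref{entropy-of-subsystem-and-factor} as the bridge. The key observation is that for conjugacy, factor, and subsystem, the ``yes'' instances of the second set $(ii)$ in Theorem~\ref{inseparablePairs} always lie in the corresponding property (since strongly conjugate cellular automata are in particular conjugate, and conjugate systems are factors and subsystems of one another), while the ``yes'' instances of the first set $(i)$ always lie outside the property (since having strictly higher entropy rules out being a factor, a subsystem, or conjugate, by Proposition~\ref{entropy-of-subsystem-and-factor}). Hence any algorithm deciding one of these properties would separate the two recursively inseparable sets, a contradiction.

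First I would fix $\mathbb{M}=\mathbb{N}$ and take a pair $(\mathcal{F},\mathcal{G})$ produced by the construction in the proof of Theorem~\ref{inseparablePairs} from an arbitrary one-sided radius-$1$ cellular automaton $H$ with a spreading state. For the conjugacy statement: if $H$ is nilpotent then $\mathcal{F}\scong\mathcal{G}$, so in particular $(\mathcal{F})\cong(\mathcal{G})$; if $H$ is not nilpotent then $h(\mathcal{F})>h(\mathcal{G})$, so by Proposition~\ref{entropy-of-subsystem-and-factor} $\mathcal{F}$ and $\mathcal{G}$ cannot be conjugate (conjugate systems have equal entropy), and a fortiori they are not strongly conjugate. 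Thus a decision procedure for (strong or weak) conjugacy would decide nilpotency of $H$, contradicting Theorem~\ref{nilpotency}. For the factor statement, I would apply the same dichotomy to the ordered pair $(\mathcal{G},\mathcal{F})$: when $H$ is nilpotent, $\mathcal{F}\scong\mathcal{G}$ gives a strong conjugacy, whose inverse is a strong factor map from $\mathcal{F}$ onto $\mathcal{G}$, so $\mathcal{G}$ is a strong factor of $\mathcal{F}$; when $H$ is not nilpotent, $h(\mathcal{F})>h(\mathcal{G})$ is compatible with $\mathcal{G}$ being a factor of $\mathcal{F}$, so I instead ask whether $\mathcal{F}$ is a factor of $\mathcal{G}$ --- nilpotency again gives a strong factor map, while non-nilpotency forbids even a weak one since a factor cannot increase entropy. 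Running both queries (``is $\mathcal{F}$ a factor of $\mathcal{G}$?'' and ``is $\mathcal{G}$ a factor of $\mathcal{F}$?'') would then let one decide nilpotency. The subsystem statement is handled identically with ``embedding'' in place of ``factor map'', again invoking Proposition~\ref{entropy-of-subsystem-and-factor} for the entropy bound on subsystems, and noting that a strong conjugacy is in particular a strong embedding in both directions.

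The one point requiring a little care --- the main (minor) obstacle --- is making sure the reduction is set up so that a \emph{single} algorithm for the property in question yields an algorithm for nilpotency. For the conjugacy case this is immediate because conjugacy is symmetric and the two inseparable sets map cleanly to ``conjugate'' and ``not conjugate''. For factor and subsystem, which are not symmetric, one must query the property in the correct direction, or query it in both directions and combine the answers; I would phrase the corollary's proof so that from an oracle for ``$F$ is a factor of $G$'' one builds the pair $(\mathcal{F},\mathcal{G})$ with $G=\mathcal{G}$, $F=\mathcal{F}$, getting ``yes'' exactly when $H$ is nilpotent (nilpotency $\Rightarrow$ strong conjugacy $\Rightarrow$ $\mathcal{F}$ is a strong, hence weak, factor of $\mathcal{G}$; non-nilpotency $\Rightarrow h(\mathcal{F})>h(\mathcal{G})\Rightarrow\mathcal{F}$ not a factor of $\mathcal{G}$). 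This gives a clean many-one reduction from the undecidable nilpotency problem, establishing undecidability for both the strong and the weak variants simultaneously, since the ``yes'' witnesses are strong and the ``no'' cases are excluded even weakly. Finally, the two-sided case $\mathbb{M}=\mathbb{Z}$ follows because Theorem~\ref{inseparablePairs} reduces to the two-sided setting, as noted immediately before the corollary.
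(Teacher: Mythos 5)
Your proof is correct and follows essentially the same route as the paper: both argue that a decider for (strong) conjugacy, factor, or subsystem would separate the two recursively inseparable sets of Theorem~\ref{inseparablePairs}, using Proposition~\ref{entropy-of-subsystem-and-factor} to exclude the property on set (i) and strong conjugacy to guarantee it on set (ii). The only cosmetic difference is in the asymmetric cases: the paper queries the factor/subsystem property in both directions, whereas you observe that a single query in the correct direction (is $\mathcal{F}$ a factor of $\mathcal{G}$?) already yields a many-one reduction from nilpotency; both resolutions are fine.
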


\begin{proof}
{\it 1.} The pairs in the set {\it (i)} of Theorem \ref{inseparablePairs} cannot be (strongly) conjugate, and the pairs in {\it(ii)} have to be. Thus deciding (strong) conjugacy would separate these sets.

{\it 2.} One of the cellular automata in the pair from the set {\it (i)} has strictly higher entropy than the other, so it cannot be a (strong) factor of the other. On the other hand cellular automata of pairs from the set {\it (ii)} are (strong) factors of each other. So checking whether both cellular automata of a pair is a (strong) factor of the other would separate the sets of Theorem \ref{inseparablePairs}.

{\it 3.} In a similar way, since a subsystem cannot have higher entropy.

\end{proof}

\subsection{Restricted cases}

We have seen that (strong) conjugacy is undecidable in general for one-dimensional cellular automata. A natural follow-up question is whether (strong) conjugacy remains undecidable even if we restrict to some natural subsets of cellular automata. When restricted to periodic cellular automata, the following is known.

\begin{theorem}{(\cite[Corollary 5.17.]{Epperlein17})}
Conjugacy of periodic cellular automata on one- or two-sided subshifts of finite type is decidable.
\end{theorem}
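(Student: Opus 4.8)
Here I record how I would approach the final statement (Epperlein's corollary) before looking at the cited argument. The statement asserts that, given two \emph{periodic} cellular automata, i.e.\ endomorphisms $F\colon X\to X$ and $G\colon Y\to Y$ of one- or two-sided subshifts of finite type with $F^p=\id$ and $G^q=\id$ for some $p,q\geq 1$, one can decide whether the topological dynamical systems $(X,F)$ and $(Y,G)$ are conjugate (the conjugacy is not required to commute with the shift).

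The plan is to produce a computable complete conjugacy invariant. First one reduces to a common period: for a fixed radius, checking $F^n=\id$ is a finite computation on composed local rules, so by dovetailing one finds the least $p$ with $F^p=\id$, and then $m=\operatorname{lcm}(p,q)$ satisfies $F^m=G^m=\id$. A periodic cellular automaton is thus the same thing as an action of the finite cyclic group $\ZZ/m$ on a zero-dimensional compact metric space, and a conjugacy is an equivariant homeomorphism. For each divisor $d\mid m$ set $\operatorname{Fix}_d(F)=\{x\in X\mid F^d(x)=x\}$. Because $F^d$ is a cellular automaton of bounded radius, the condition $F^d(x)_i=x_i$ only constrains a bounded window around $i$, so $\operatorname{Fix}_d(F)$ is again a subshift of finite type, and a presentation of it is computable from $F$; these SFTs form a chain over the divisor lattice of $m$, on each of which $F$ acts with the induced $\ZZ/d$-action, freely on the open set $E_d(F)=\operatorname{Fix}_d(F)\setminus\bigcup_{e\mid d,\,e\neq d}\operatorname{Fix}_e(F)$ of exact-period-$d$ points. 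The proposed invariant of $(X,F)$ is, for every $d\mid m$, the homeomorphism type of the filtered pair $\big(\operatorname{Fix}_d(F),\,\bigcup_{e\mid d,\,e\neq d}\operatorname{Fix}_e(F)\big)$ together with the $\ZZ/d$-action.

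To show this invariant is complete one would reconstruct a conjugacy layer by layer, from $d=1$ upward through the divisors of $m$. A free action of a finite group on a zero-dimensional compact (more generally locally compact $\sigma$-compact) space admits a clopen transversal meeting each orbit exactly once -- by a greedy compactness argument -- so the equivariant-homeomorphism type of $E_d(F)$ is determined by the plain homeomorphism type of its orbit space, and one extends a given equivariant homeomorphism already defined on the union of the lower layers over the newly appearing period-$d$ orbits, matching both the orbit space of $E_d$ and the way these orbits accumulate onto the already-matched points of smaller period (this accumulation is exactly what the filtered-pair invariant records). Telescoping clopen pieces in a back-and-forth completes the homeomorphism. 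Finally, deciding equality of the invariants is routine: it is a finite list, one entry for each of the finitely many divisors of the known number $m$, and each entry is a homeomorphism comparison of explicitly computable SFTs and of explicitly computable open subsets of SFTs; for SFTs the Cantor--Bendixson analysis terminates after boundedly many steps, so these homeomorphism types reduce to a bounded amount of finiteness, cardinality, compactness and rank data, all computable. The argument is identical for one-sided SFTs (the fixed-point sets are still SFTs) and covers full shifts as a special case.

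I expect the main obstacle to be the completeness claim and, hand in hand with it, the verification that the homeomorphism type of each filtered layer $\big(\operatorname{Fix}_d(F),\,\bigcup_{e\mid d,\,e\neq d}\operatorname{Fix}_e(F)\big)$ coming from an SFT is captured by a finite computable object: one must control the way small-period points sit as limits of larger-period orbits and show that SFT structure forces these relative zero-dimensional spaces into a short, effectively listable family, and then carry out the layered back-and-forth cleanly. The reduction to a common period, the fact that the fixed-point sets are SFTs, and the decidability of comparing the resulting finite numerical data are all routine bookkeeping.
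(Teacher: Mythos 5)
The paper does not prove this statement at all: it is quoted verbatim from Epperlein \cite[Corollary 5.17.]{Epperlein17} and used as a black box, so there is no in-paper argument to compare yours against. Judged on its own terms, your reduction of the problem to deciding equivariant homeomorphism of $\ZZ/m$-actions on zero-dimensional compacta, and the observation that each $\operatorname{Fix}_d(F)$ is an effectively presented SFT, are sound and are the natural first moves. But the proposal has a genuine gap exactly where you suspect it: the proposed invariant is not shown to be complete, and as stated it is almost certainly too weak. Recording, for each divisor $d$ separately, the homeomorphism type of the pair $\big(\operatorname{Fix}_d(F),\bigcup_{e\mid d,\,e\neq d}\operatorname{Fix}_e(F)\big)$ loses the way incomparable layers interact (e.g.\ how $\operatorname{Fix}_2$ and $\operatorname{Fix}_3$ intersect and jointly accumulate inside $\operatorname{Fix}_6$); what one actually needs is the equivariant homeomorphism type of the whole divisor-indexed diagram of closed sets. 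Moreover, the layer-by-layer extension step does not work as described: even when two pairs $(X,A)$ and $(X',A')$ of zero-dimensional compacta are homeomorphic as pairs, a \emph{given} homeomorphism $A\to A'$ need not extend to $X\to X'$, so the partial conjugacy you have built on the lower layers can fail to extend even though your invariants match. The back-and-forth has to be run on all layers simultaneously, and the invariant must be strengthened to whatever makes that possible.

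The second, smaller gap is the claim that comparing these homeomorphism types is ``routine'': for countable compact spaces the Mazurkiewicz--Sierpi\'nski classification by Cantor--Bendixson data does the job, but an SFT may be uncountable, and then the homeomorphism type of a pair (perfect kernel, isolated-point structure, and how the distinguished closed subset meets both) is a nontrivial classification problem whose effectiveness for SFTs you assert rather than prove. None of this says the strategy is unsalvageable --- it is broadly the kind of analysis one expects behind Epperlein's result --- but the completeness of the invariant and the effectivity of the homeomorphism comparisons are the entire mathematical content of the theorem, and both are missing here.
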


Periodic cellular automata are the least sensitive to changes in the initial configuration, and are precisely the reversible equicontinuous cellular automata. Naturally one could ask what happens if the requirement of reversibility is dropped, i.e. is conjugacy of eventually periodic cellular automata decidable (\cite[Question 8.6.]{Epperlein17}), or the strong conjugacy of either.

In the other end of the sensitivity scale are the cellular automata that are the most sensitive to initial conditions, i.e. positively expansive ones. A cellular automaton $(A^\ZZ,F)$ is called \emph{positively expansive} if
\[
\exists\varepsilon>0:\forall c,e\in A^\ZZ: c\neq e\implies (\exists n\in\mathbb{N}:d(F^n(c),F^n(e))>\varepsilon). 
\]
Positively expansive cellular automata are quite extensively studied which allows us to deduce the following result.

\begin{proposition}
Conjugacy of positively expansive cellular automata on one- or two-sided full shifts is decidable.
\end{proposition}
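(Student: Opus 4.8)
The plan is to reduce the problem to the known classification of positively expansive cellular automata on full shifts. The key structural fact I would invoke is that a positively expansive cellular automaton on a two-sided full shift $A^{\ZZ}$ is conjugate to a one-sided subshift of finite type on which the induced map is the shift — more precisely, by results of Nasu and of K\r{u}rka, every positively expansive CA $(A^{\ZZ},F)$ is topologically conjugate to $(\sigma, \Sigma)$ where $\Sigma$ is an irreducible one-sided SFT determined (up to conjugacy) by an explicitly computable transition graph, and the conjugacy is itself computable from the local rule of $F$. In fact one has a uniform bound: there is a computable $N$ (depending on the radius and alphabet) such that the factor map onto the trace is injective on cylinders of length $N$, so the relevant SFT can be read off from $\mathcal{R}_F(N,N)$. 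Since the problem only asks for decidability, the number of cases is finite at each bounded length, so everything in sight is effectively computable.

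First I would, given $F$ and $G$ on the full shift, test effectively whether each is positively expansive (this is itself decidable for CA on full shifts, or we may simply assume the inputs are promised to be positively expansive — the statement is about the conjugacy question restricted to that class). Second, for each of $F$ and $G$ I would compute the associated one-sided SFT presentation, say $(\sigma_F,\Sigma_F)$ and $(\sigma_G,\Sigma_G)$, as edge or vertex shifts of computable finite graphs. Third, since $F\cong G$ iff $(\sigma_F,\Sigma_F)\cong(\sigma_G,\Sigma_G)$ as topological dynamical systems, and conjugacy of shifts on one-sided SFTs is exactly conjugacy of one-sided SFTs — which is decidable, because one-sided SFT conjugacy reduces to checking whether the two defining graphs have the same ``total amalgamation'' (equivalently, are related by state splitting/amalgamation), and this canonical form is computable — I would compute the canonical forms and compare. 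The same works verbatim for one-sided full shifts, where $F$ itself already is a CA on $A^{\NN}$ and its trace presentation is obtained directly.

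The main obstacle is making the reduction genuinely effective rather than merely existential: one must extract, algorithmically and with an a priori bound, the SFT presentation of a positively expansive CA from its local rule. This needs the quantitative form of the positive-expansivity structure theorem (a computable constant of expansivity, or a computable modulus for the injectivity of the trace map on long enough cylinders), so that one knows when to stop searching; I would cite the relevant results of Nasu and K\r{u}rka for this, and note that strong conjugacy would require the additional, easy observation that the conjugacy produced by the structure theorem intertwines the shift with the full shift's shift, so the strong and weak variants coincide here. The decidability of one-sided SFT conjugacy (via the amalgamation normal form) is classical and poses no difficulty.
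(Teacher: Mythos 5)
Your argument follows essentially the same route as the paper: conjugate each positively expansive CA to its trace/column subshift, which is (effectively, by di Lena's result) a one-sided SFT, and then invoke the decidability of one-sided SFT conjugacy via Williams' amalgamation normal form. One caveat: your closing aside that ``the strong and weak variants coincide here'' is not justified --- the conjugacy onto the trace subshift intertwines $F$ with the SFT's shift but does not track the \emph{spatial} shift action on $A^{\mathbb{M}}$, so this reduction says nothing about strong conjugacy, which the paper explicitly leaves open for positively expansive cellular automata.
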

\begin{proof}
Let $F:A^\mathbb{M}\rightarrow A^\mathbb{M}$ and $G:B^\mathbb{M}\rightarrow B^\mathbb{M}$ be two positively expansive cellular automata. Due to the positive expansivity, $F$ and $G$ are conjugate to $\tau_k(F)$ and $\tau_k(G)$ (resp.) for large enough $k$. These subshifts are conjugate to subshifts of finite type (\cite{BoyleKitchens99} for one-sided case, \cite{Nasu95} for two-sided case). According to \cite[Theorem 36]{diLena07} we can effectively compute these subshifts. The claim follows, as the conjugacy of one-sided subshifts of finite type is decidable (\cite{Williams73}).
\end{proof}

Again it is natural to ask whether strong conjugacy of positively expansive cellular automata is decidable or (strong) conjugacy of expansive cellular automata, i.e. reversible cellular automata $(A^\ZZ,F)$ such that
\[
\exists\varepsilon>0:\forall c,e\in A^\ZZ: c\neq e\implies(\exists n\in\mathbb{Z}: d(F^n(c),F^n(e))>\varepsilon).
\]
It is conjectured, and known in some special cases, that expansive cellular automata are conjugate to two-sided subshifts of finite type. However, even if this conjecture holds the above reasoning could not be used as it is, since it is not known whether conjugacy of two-sided subshifts of finite type is decidable.


\subsection{Conjugacy of subshifts}

Let $X\subseteq A^\MM,Y\subseteq B^\MM$ be two subshifts, i.e. topologically closed and shift invariant subsets. The shift dynamical systems $(X,\sigma)$ and $(Y,\sigma)$ are conjugate if there exists a homeomorphism $\phi:X\rightarrow Y$ such that $\phi \sigma=\sigma\phi$. Perhaps the most important open questions in symbolic dynamics consider conjugacy of subshifts. Conjugacy of one-sided subshifts of finite type is known to be decidable (we used this in the previous section), but the same problem for two-sided subshifts of finite type and for one- and two-sided sofic shifts is unknown.

From the undecidability of strong conjugacy we get the following undecidability result regrading (even one-sided) subshifts of finite type.



\begin{proposition}
Let $X,Y\subseteq(A\times A)^\mathbb{M}$ be two subshifts of finite type. It is undecidable whether $X$ and $Y$ are conjugate via a conjugacy of the form $\phi\times\phi$.
\end{proposition}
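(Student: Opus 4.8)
The plan is to encode the strong conjugacy problem for cellular automata directly as a conjugacy problem of finite-type subshifts via space-time diagrams, and then invoke Theorem \ref{inseparablePairs} (together with Theorem \ref{nilpotency}). First I would recall that for a cellular automaton $(A^\MM,F)$ the space-time subshift $st(F)\subseteq (A^\MM)^\NN$ is a subshift, and in fact of finite type when $F$ has a radius (the finitely many constraints being: each row is the image of the previous one under the local rule). The crucial observation is that two cellular automata $(A^\NN,F)$ and $(A^\NN,G)$ are \emph{strongly} conjugate if and only if their space-time subshifts $st(F)$ and $st(G)$, viewed inside $(A\times A)^{\NN\times\NN}$ after suitable coordinate identification (one vertical $\NN$ for time, one horizontal $\MM$ for space, packaged as a subshift over the alphabet $A\times A$), are conjugate via a map of the product form $\phi\times\phi$. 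The point is that a strong conjugacy $\psi\colon A^\NN\to A^\NN$ with $\psi F=G\psi$ acts row-by-row on space-time diagrams, sending $st(F)$ to $st(G)$, and this row-wise action is literally $\phi\times\phi$ where $\phi$ is the sliding-block code realizing $\psi$; conversely any $\phi\times\phi$ conjugacy between the space-time SFTs must, by shift-commutation in the time direction, respect the rows and hence descend to a strong conjugacy of the original automata.

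Next I would make the reduction explicit. Given an instance $H\colon B^\NN\to B^\NN$ of the nilpotency problem of Theorem \ref{nilpotency}, build the cellular automata $\mathcal F$ and $\mathcal G$ on $(A\times B)^\NN$ as in the proof of Theorem \ref{inseparablePairs}, and let $X=st(\mathcal F)$ and $Y=st(\mathcal G)$, regarded as subshifts of finite type over the alphabet $(A\times B)\times(A\times B)$ (rename this $A'\times A'$ with $A'=A\times B$) — here the two $A'$-coordinates index ``current row'' and ``next row'', so that $X$ and $Y$ are genuinely subshifts of $(A'\times A')^\MM$ in the sense of the proposition. If $H$ is nilpotent, then $\mathcal F\scong\mathcal G$, and by the equivalence above $X$ and $Y$ are conjugate via some $\phi\times\phi$. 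If $H$ is not nilpotent, then $h(\mathcal F)>h(\mathcal G)$ by part (i) of Theorem \ref{inseparablePairs}; since $h(st(F))=h(A^\MM,F)$ for any cellular automaton (entropy is a conjugacy invariant and the space-time subshift carries the same entropy as the system generating it), $X$ and $Y$ have different entropies, hence are not conjugate by \emph{any} homeomorphism, in particular not by one of the form $\phi\times\phi$. Thus a decision procedure for ``$X\cong Y$ via $\phi\times\phi$'' would decide nilpotency, contradicting Theorem \ref{nilpotency}.

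The main obstacle, and the step requiring care, is the ``only if''/``converse'' direction of the equivalence between strong conjugacy of CAs and $\phi\times\phi$-conjugacy of their space-time SFTs: one must verify that a conjugacy of the product form is forced to preserve the time-row structure and therefore genuinely comes from a strong conjugacy of the underlying automata, rather than mixing the two coordinates in some exotic way. I expect this to follow from the fact that in a $\phi\times\phi$ map the ``first'' and ``second'' $A'$-coordinates are transformed by the same block code $\phi$, so the global map intertwines the forward-time shift of $X$ with that of $Y$ and in particular sends the graph $\{(r,Fr)\}$ to $\{(r',Gr')\}$, whence $\phi$ is a well-defined conjugacy with $\phi\mathcal F=\mathcal G\phi$; the shift-commutation of $\phi$ in the space direction is inherited from that of $\phi\times\phi$. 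Secondary bookkeeping points are: checking that $st(\mathcal F)$ and $st(\mathcal G)$ really are of finite type (immediate from radius $1$ of $\mathcal F,\mathcal G$ and $H$), and confirming the entropy identity $h(st(F))=h(A^\MM,F)$, which is standard since the columns of $st(F)$ over a width-$n$ window are exactly counted by $\mathcal R_F(n,t)$.
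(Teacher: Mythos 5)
Your construction is essentially the paper's: you encode each cellular automaton by the two-track subshift recording a configuration together with its image (your ``current row / next row'' packaging of the space-time diagram is exactly the graph $\{(c,\mathcal{F}(c))\}$, since the first row determines all later ones), and you observe that a conjugacy of the form $\phi\times\phi$ between two such graphs forces $\phi\mathcal{F}=\mathcal{G}\phi$ with $\phi$ shift-commuting, i.e.\ is the same thing as a strong conjugacy of the automata. That equivalence, combined with Theorem \ref{inseparablePairs}, is the whole proof, and the paper runs the reduction in exactly this way.

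However, your handling of the negative case contains a genuine error. You argue that when $H$ is not nilpotent, $X$ and $Y$ ``have different entropies, hence are not conjugate by any homeomorphism,'' invoking $h(st(F))=h(A^\MM,F)$. As one-dimensional subshifts of $(A\times A)^\MM$ --- which is what the proposition is about --- $X$ and $Y$ are each conjugate to the full shift over the underlying alphabet via projection onto the first track (the second track is a local function of the first). So they always have the \emph{same} entropy, namely the logarithm of the alphabet size, and they always \emph{are} conjugate, indeed by a shift-commuting homeomorphism; the entire content of the proposition is that conjugacy of the special form $\phi\times\phi$ is undecidable. The identity you appeal to concerns the entropy of the time shift acting on the two-dimensional space-time subshift, which is neither the entropy of the one-dimensional subshift $X$ nor a conjugacy invariant of $(X,\sigma)$; likewise your appeal to ``shift-commutation in the time direction'' has no meaning once $X$ is packaged as a one-dimensional subshift, since $X$ carries no time shift. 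The fix is immediate and already contained in your own first paragraph: in the non-nilpotent case $h(\mathcal{F})>h(\mathcal{G})$ shows $\mathcal{F}$ and $\mathcal{G}$ are not conjugate, hence not strongly conjugate, hence by your equivalence (whose correct justification is the graph argument you also sketch: $\phi\times\phi$ maps the graph of $\mathcal{F}$ into the graph of $\mathcal{G}$, and matching first coordinates yields $\phi\mathcal{F}=\mathcal{G}\phi$) the subshifts $X$ and $Y$ admit no conjugacy of the form $\phi\times\phi$. With that one step rerouted, your proof coincides with the paper's.
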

\begin{proof}
The proof is a direct reduction from the undecidability of strong conjugacy of cellular automata. Let $F,G:A^\mathbb{M}\rightarrow A^\mathbb{M}$ be two cellular automata. Let $X=\{(c,F(c))\mid c\in A^\mathbb{M}\}$ and $Y=\{(c,G(c))\mid c\in A^\mathbb{M}\}$. These subshifts are naturally conjugate to $A^\mathbb{M}$. Suppose there exists a conjugacy $\phi\times\phi:X\rightarrow Y$. Then $\phi$ commutes with the shift and for every $c\in A^\mathbb{M}$ we have that $(\phi(c),\phi F(c))=(e,G(e))$, where  $e$ has to be $\phi(c)$, and so $\phi F(c)=G\phi(c)$ for all $c\in A^\mathbb{M}$. In other words $\phi$ is a strong conjugacy of $(A^\mathbb{M},F)$ and $(A^\mathbb{M},G)$.

On the other hand, any strong conjugacy $\phi$ from $(A^\mathbb{M},F)$ to $(A^\mathbb{M},G)$ immediately gives a conjugacy $\phi\times\phi$ between $X$ and $Y$.
\end{proof}


\section{Two-dimensional case}
\label{Two-dimensional case}

After seeing that conjugacy is undecidable for cellular automata, one immediate question is to ask whether this holds for reversible cellular automata. In this section we turn to two-dimensional cellular automata and show that for two-dimensional reversible cellular automata conjugacy, being a factor, being a subsystem, and the strong variants of all of these are undecidable properties. The fact that conjugacy is undecidable was already proved in \cite{Epperlein17} even when restricted to periodic cellular automata with period two.

Notice that in this section we will only consider two-sided cellular automata, and will simply call them cellular automata. There has not been much study on the one-sided cellular automata beyond the one-dimensional case. Our proof borrows a lot from the proof that reversibility is undecidable for two-dimensional cellular automata, but this result is not known for one-sided two-dimensional cellular automata. Notice however that for example surjectivity is undecidable also for one-sided two-dimensional cellular automata as this follows from the two-sided case.

\subsection{Conjugacy of reversible two-dimensional cellular automata}

Recall that we denote $\mathcal{C}_n=[0,n)^2$. Let $\mathcal{A}\subseteq A^{\mathcal{C}_n}$ be a set of patterns, considered here to be valid, and define a \emph{direction function} $\delta:A\rightarrow\{(\pm 1,0),(0,\pm 1)\}$. A sequence $p_1,p_2,\dots, p_k\in\ZZ^2$ is a \emph{$\delta$-path} on $c\in A^{\ZZ^2}$ if $p_{i+1}=p_i+\delta(c_{p_i})$ for all $i\in\{1,\cdots,k-1\}$. A $\delta$-path $p_1,\dots, p_k$ is \emph{($\mathcal{A}$-)valid} if for every $i\in\{1,\dots,k\}$ we have that $\sigma_{p_i}(c)_{\mathcal{C}_n}\in \mathcal{A}$ and for all $(x,y)\in\ZZ^2$ we have that $p_i+\delta(c_{p_i})=(x,y)+\delta(c_{(x,y)})\implies (x,y)=p_i$. The first condition says that the pattern we see at $p_i$ is valid, and the second condition says that a valid path does not branch when tracing the path backwards. A pair $(\mathcal{A},\delta)$ is an \emph{orientation} on the full shift $A^{\ZZ^2}$. An orientation $(\mathcal{A},\delta)$ is \emph{acyclic} if every $\mathcal{A}$-valid $\delta$-path contains no cycles. Let $c\in A^{\ZZ^2}$ and $p\in\ZZ^2$ a position which is part of a valid path in $c$. Then $p$ is the \emph{beginning} of the valid path if $q,p$ is not a valid path for any $q\in\ZZ^2$. Similarly $p$ is the \emph{end} of the valid path if $p,q$ is not a valid path for any $q\in\ZZ^2$. Lastly $p$ is \emph{in the middle} of the valid path if it is not the end or the beginning of the path. Notice that being an end, a beginning, or in the middle of a valid path is a local property. Directly from \cite{Kari94} we get the following result. 
 
\begin{proposition}[\cite{Kari94}]
\label{infinitepaths}
Given an acyclic orientation $(\mathcal{A},\delta)$ on $A^{\ZZ^2}$, it is undecidable whether there exists an infinite $\mathcal{A}$-valid $\delta$-path. 
\end{proposition}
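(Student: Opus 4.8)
The plan is to reduce from the undecidability of reversibility of two-dimensional cellular automata as proved in \cite{Kari94}. Recall that in that construction, one starts from a tile set (equivalently, a set $\mathcal{A}$ of local patterns) together with a direction function $\delta$, forming an acyclic orientation, and the key dichotomy is whether $A^{\ZZ^2}$ admits an infinite $\mathcal{A}$-valid $\delta$-path or not. So it suffices to turn that dichotomy into a decision about the \emph{existence of an infinite valid path}, which is exactly what the statement asserts is undecidable.

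First I would recall precisely what \cite{Kari94} establishes: from a Wang tile set $T$ and the plane-filling / one-way-infinite-path machinery there, one extracts an alphabet $A$, a set $\mathcal{A}$ of allowed $\mathcal{C}_n$-patterns, and a direction function $\delta$ such that (a) the orientation $(\mathcal{A},\delta)$ is acyclic, and (b) whether there is an infinite $\mathcal{A}$-valid $\delta$-path in some $c\in A^{\ZZ^2}$ is undecidable, because this existence is equivalent to the tile set $T$ tiling the plane (or to the associated Turing machine not halting, depending on the exact formulation). The acyclicity is built into the construction: the directions locally form a "river" structure with no closed loops, so every valid path is a simple bi-infinite or one-sided-infinite or finite arc. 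I would then just quote this: the map $T \mapsto (A,\mathcal{A},\delta)$ is computable, the resulting orientation is always acyclic, and an infinite valid path exists if and only if $T$ admits a valid tiling; since the latter (domino problem) is undecidable, so is the former.

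The key steps, in order, are: (1) fix the effective construction of $(\mathcal{A},\delta)$ from a Wang tile set $T$ as in \cite{Kari94}; (2) verify from the geometry of that construction that $(\mathcal{A},\delta)$ is acyclic — every valid $\delta$-path, read backwards, is non-branching by the second clause in the definition of validity, and the local direction field has no cycles, so no valid path closes up; (3) identify the infinite valid paths with the solutions to the tiling problem for $T$, so that "$\exists$ infinite valid path" is co-r.e.-complete, hence undecidable; (4) conclude. Steps (1) and (3) are where all the content lies and they are precisely what \cite{Kari94} provides, so the proof is essentially a citation with the acyclicity observation made explicit.

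The main obstacle is step (2)–(3): making sure the notion of "infinite $\mathcal{A}$-valid $\delta$-path" as defined here (with the non-branching backward condition and the locality of being a beginning/middle/end) lines up exactly with the object shown undecidable in \cite{Kari94}, rather than with some superficially similar but inequivalent variant (e.g. one-sided vs. two-sided infinite, or path in a fixed $c$ vs. path existing in some $c$). Once the definitions are reconciled — and the paper's phrasing "there exists an infinite $\mathcal{A}$-valid $\delta$-path" should be read as: in some configuration — the reduction is immediate and the statement follows directly from \cite{Kari94}.
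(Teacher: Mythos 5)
Your proposal is correct and matches the paper's treatment: the paper offers no proof of this proposition beyond the remark that it follows ``directly from \cite{Kari94}'', and your plan --- quote the effective construction of the acyclic orientation from the tile set in \cite{Kari94} and note that the existence of an infinite valid path there encodes an undecidable tiling/halting question --- is exactly that citation made explicit. Your attention to reconciling the path definitions (acyclicity, backward non-branching, ``exists in some configuration'') is the only content the paper leaves implicit, and you handle it appropriately.
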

Notice that by compactness we have that if all valid paths are finite, then there is a global bound on the length of the valid paths.

Using similar notations as in \cite{Meyerovitch08} we write $I(c)$ for the maximal number of pairwise disjoint infinite valid paths in $c$, and for an orientation $(\mathcal{A},\delta)$ we denote $I(\mathcal{A},\delta)=\sup_{c\in A^{\ZZ^2}} I(c)$. Considerations of \cite[Section 4]{Meyerovitch08} say that only a bounded number of infinite valid paths that the acyclic orientation in \cite{Kari94} defines can fit in any one configuration. Combining this with Proposition \ref{infinitepaths} above we get the following.

\begin{proposition}
\label{infinitepaths2}
Given an acyclic orientation $(\mathcal{A},\delta)$ on $A^{\ZZ^2}$ such that $I(\mathcal{A},\delta)<\infty$, it is undecidable whether $I(\mathcal{A},\delta)=0$ or not.
\end{proposition}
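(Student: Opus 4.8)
The plan is to derive Proposition \ref{infinitepaths2} from Proposition \ref{infinitepaths} by exploiting the finiteness hypothesis $I(\mathcal{A},\delta)<\infty$. The key observation is that, under this hypothesis, the two quantities \textit{``there exists an infinite valid path''} and \textit{``$I(\mathcal{A},\delta)\neq 0$''} coincide: an infinite valid path exists in some configuration iff $I(c)\geq 1$ for some $c$ iff $I(\mathcal{A},\delta)\geq 1$, i.e. $I(\mathcal{A},\delta)\neq 0$. So the only real content is that the class of instances to which Proposition \ref{infinitepaths} applies can be taken to satisfy $I(\mathcal{A},\delta)<\infty$ as well; once that is in place, the undecidability of ``$I(\mathcal{A},\delta)=0$ or not'' is literally the undecidability of ``does an infinite valid path exist'' restricted to this subclass of orientations.

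The step carrying this weight is the appeal to \cite[Section 4]{Meyerovitch08}: the specific acyclic orientation built from the Kari tile set in \cite{Kari94} has the property that only a bounded number of pairwise disjoint infinite valid paths can coexist in a single configuration, i.e. $I(\mathcal{A},\delta)<\infty$ for every instance produced by that construction. First I would recall this: in the Kari construction, the undecidability reduction associates to each Turing machine $M$ an acyclic orientation $(\mathcal{A}_M,\delta_M)$, and the geometry of the tiles forces any infinite valid path to carry a faithful space-time history of a computation of $M$; the plane-filling / area arguments of \cite{Meyerovitch08} then show that distinct infinite paths must be ``far apart'' in a way that limits their number per configuration to a fixed constant (in fact the relevant bound is uniform over the instances). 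Thus $(\mathcal{A}_M,\delta_M)$ always satisfies $I(\mathcal{A}_M,\delta_M)<\infty$.

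With that recalled, the argument closes quickly. Given a Turing machine $M$, form $(\mathcal{A}_M,\delta_M)$; by the above it is an acyclic orientation with $I(\mathcal{A}_M,\delta_M)<\infty$. By Proposition \ref{infinitepaths} it is undecidable whether $(\mathcal{A}_M,\delta_M)$ admits an infinite valid path. But admitting an infinite valid path is exactly $I(\mathcal{A}_M,\delta_M)\geq 1$, equivalently $I(\mathcal{A}_M,\delta_M)\neq 0$ (as $I$ takes nonnegative integer values and is finite here). Hence deciding whether $I(\mathcal{A},\delta)=0$ for a given acyclic orientation with $I(\mathcal{A},\delta)<\infty$ would decide the halting-type problem underlying Proposition \ref{infinitepaths}, a contradiction.

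The main obstacle I anticipate is not logical but expository: making sure the finiteness bound $I(\mathcal{A},\delta)<\infty$ is genuinely a feature of \emph{every} instance emitted by the reduction (not merely of the ``no infinite path'' instances), so that the reduction stays inside the promised subclass of orientations. This is precisely what \cite[Section 4]{Meyerovitch08} establishes for the Kari orientation, so I would cite it for exactly that point and keep the rest of the proof to the two-line equivalence above. Everything else — acyclicity, the local checkability of path endpoints, and the compactness remark that finitely many finite paths are length-bounded — is already in place from the surrounding text and needs no repetition.
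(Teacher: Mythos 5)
Your proposal matches the paper's argument exactly: the paper likewise observes (in the paragraph preceding the proposition) that the acyclic orientation of \cite{Kari94} satisfies $I(\mathcal{A},\delta)<\infty$ by the considerations of \cite[Section 4]{Meyerovitch08}, and then combines this with Proposition \ref{infinitepaths} via the equivalence between ``an infinite valid path exists'' and ``$I(\mathcal{A},\delta)\neq 0$''. Your write-up is correct and, if anything, spells out the reduction more explicitly than the paper does.
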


Now we proceed to defining cellular automata by adding a layer on top of $A^{\ZZ^2}$ in a similar fashion as in \cite{Kari94} and \cite{Meyerovitch08}. For an acyclic orientation $(\mathcal{A},\delta)$ with $I(\mathcal{A},\delta)<\infty$ we want to define two reversible cellular automata such that if $I(\mathcal{A},\delta)=0$ then the cellular automata are strongly conjugate and have zero entropy, and if $I(\mathcal{A},\delta)\neq 0$, then one of the cellular automata has strictly larger entropy than the other one.

On top of $A^{\ZZ^2}$ we put another layer $B^{\ZZ^2}$ on which we will simulate one-dimensional cellular automata $B^{\ZZ}\rightarrow B^{\ZZ}$ on the valid paths of $A^{\ZZ^2}$. We want to simulate the shift map $\sigma$. However the simple shift map alone leads to non-reversible cellular automata on finite valid paths as information is either lost or has to be made up at the beginnings and ends of valid paths. To avoid this we will take $B$ to be $B_1\times B_1$ for some finite set $B_1$ and the map $\widehat{\sigma}:B^\ZZ\rightarrow B^\ZZ$ that shifts the first track to the left and the second one to the right, i.e. $\widehat{\sigma}(c,e)_i=(c_{i+1},e_{i-1})$ for all $(c,e)\in B^\ZZ=(B_1\times B_1)^\ZZ$. In the beginnings and ends of valid paths we will simply move the content from one track to the other forming a cycle. For technical reasons we will further take that $B_1=B_2\times B_2$ for a finite set $B_2$ (with at least two elements). Now our one-dimensional cellular automaton is $\widehat{\sigma}:(B_2^4)^\ZZ\rightarrow (B_2^4)^\ZZ$ but this should be considered as one two-track tape going to left and right. The choice $B_1=B_2\times B_2$ is done so that we can give two different ways to restrict $\widehat{\sigma}$ to the finite valid paths.

\begin{figure}
\centering
\begin{minipage}{.45\textwidth}
  \centering
\begin{tikzpicture}
\foreach \i in {0,1,2}{
	\draw (0.75*\i,0) rectangle node {$v_\i$} ++(0.75,0.75);
	\draw (0.75*\i,0.75) rectangle node {$u_\i$} ++(0.75,0.75);
}
\draw (2.25,0) rectangle ++(1.5,1.5);
\draw (3.75,0) rectangle node {$v_{n-1}$} ++(0.75,0.75);
\draw (3.75,0.75) rectangle node {$u_{n-1}$} ++(0.75,0.75);
\draw[line width = 0.3mm, dotted] (2.5,0.75) -- (3.5,0.75);
\draw[->, line width=0.3mm] (0,1.125) arc (90:270:0.7) -- (4.5,-0.275) arc (-90:90:0.7);
\draw[->, line width=0.3mm] (0,0.375) arc (90:270:0.25625) -- (4.5,-0.1375) arc (-90:90:0.25625);
\end{tikzpicture}
  \caption{Illustration of $\shift$.}
  \label{fig:shift}
\end{minipage}%
\hfill
\begin{minipage}{.45\textwidth}
  \centering
\begin{tikzpicture}
\foreach \i in {0,1,2}{
	\draw (0.75*\i,0) rectangle node {$v_\i$} ++(0.75,0.75);
	\draw (0.75*\i,0.75) rectangle node {$u_\i$} ++(0.75,0.75);
}
\draw (2.25,0) rectangle ++(1.5,1.5);
\draw (3.75,0) rectangle node {$v_{n-1}$} ++(0.75,0.75);
\draw (3.75,0.75) rectangle node {$u_{n-1}$} ++(0.75,0.75);
\draw[line width = 0.3mm, dotted] (2.5,0.75) -- (3.5,0.75);
\draw[->, line width=0.3mm] (0,1.125) arc (90:270:0.7) -- (4.5,-0.275) arc (-90:90:0.325);
\draw[->, line width=0.3mm] (0,0.375) arc (90:270:0.25625) -- (4.5,-0.1375) arc (-90:90:0.63125);
\end{tikzpicture}
  \caption{Illustration of $\shiftt$.}
  \label{fig:shiftt}
\end{minipage}%
\end{figure}

We define two maps, $\shift$ and $\shiftt$, on finite words $(B_2\times B_2)^+$ as follows:
\[
\shift \left(
\begin{array}{ccccc}
	u_0 & u_1 & \cdots & u_{n-2} & u_{n-1} \\
	v_0 & v_1 & \cdots & v_{n-2} & v_{n-1}
\end{array} \right)
=
\left(
\begin{array}{ccccc}
	u_1 & u_2 & \cdots & u_{n-1} & u_0\\
	v_1 & v_2 & \cdots & v_{n-1} & v_0
\end{array} \right),
\]
and $\shiftt$ as
\[
\shiftt \left(
\begin{array}{ccccc}
	u_0 & u_1 & \cdots & u_{n-2} & u_{n-1} \\
	v_0 & v_1 & \cdots & v_{n-2} & v_{n-1}
\end{array} \right)
=
\left(
\begin{array}{ccccc}
	u_1 & u_2 & \cdots & u_{n-1} & v_0\\
	v_1 & v_2 & \cdots & v_{n-1} & u_0
\end{array} \right),
\]
where ${u_i\atop v_i}\in B_2\times B_2$ for all $i\in\{0,1,\dots,n-1\}$. The map $\shift$ is obtained by taking a finite word, gluing the ends together, and applying the shift map $\sigma:(B_2\times B_2)^\ZZ\rightarrow (B_2\times B_2)^\ZZ$ locally (Figure \ref{fig:shift}). One can also consider $\shiftt$ to be obtained from $\sigma$ by gluing the ends of finite words together, but this time the tape is also flipped to form a M\"obius strip (Figure \ref{fig:shiftt}). Notice that if we restrict $\shift$ and $\shiftt$ to the words of even length $2n$, then we have a bijection $\phi$ such that $\shift=\phi^{-1}\shiftt^2\phi$, namely
\begin{equation}
\label{phi}
\phi \left(
\begin{array}{ccccc}
	u_0 & u_1 & \cdots & u_{2n-2} & u_{2n-1} \\
	v_0 & v_1 & \cdots & v_{2n-2} & v_{2n-1}
\end{array} \right)
=
\left(
\begin{array}{ccccc}
	u_0 & v_0 & \cdots & u_{n-1} & v_{n-1}\\
	u_n & v_n & \cdots & u_{2n-1} & v_{2n-1}
\end{array} \right).
\end{equation}

Let $(\mathcal{A},\delta)$ be an acyclic orientation of $A^{\ZZ^2}$ with $I(\mathcal{A},\delta)<\infty$. We will define two cellular automata $F_{\shift},F_{\shiftt}:(A\times B)^{\ZZ^2}\rightarrow(A\times B)^{\ZZ^2}$ where $B=(B_2\times B_2)^2$ as was defined above. Both will map the $A$-layer by identity. On the $B$-layer we use $\shift$ for $F_\shift$ and $\shiftt$ for $F_{\shiftt}$. To be more exact: Let $c\in A^{\ZZ^2},e\in B^{\ZZ^2},p_1\in\ZZ^2$, and $e_{p_1}=(a_1,b_1,x_1,y_1)$. We will define $F_\shift$ and $F_\shiftt$ in cases:

\noindent$\bullet$ \underline{If $p_1$ is not part of a valid path in $c$}, then
\[
F_\shift(c,e)_{p_1}=F_\shiftt(c,e)_{p_1}=(c,e)_{p_1}.
\]
\noindent$\bullet$ \underline{If $p_1$ is a beginning of a valid path}, and there exists $p_2\in\ZZ^2$ such that $p_1,p_2$  is valid (so that $p_1$ is not also an end), and let $e_{p_2}=(a_2,b_2,x_2,y_2)$, then
\[
F_\shift(c,e)_{p_1}=F_\shiftt(c,e)_{p_1}=(c_{p_1},(a_2,b_2,a_1,b_1)).
\]
\noindent$\bullet$ \underline{If $p_1$ is in the middle of a valid path}, say $p_0,p_1,p_2$ is valid, $e_{p_0}=(a_0,b_0,x_0,y_0)$, and $e_{p_2}=(a_2,b_2,x_2,y_2)$, then
\[
F_\shift(c,e)_{p_1}=F_\shiftt(c,e)_{p_1}=(c_{p_1},(a_2,b_2,x_0,y_0)).
\]
We are left with the cases when $F_\shift$ and $F_\shiftt$ behave differently, namely at the ends of valid paths. 

\noindent$\bullet$ \underline{If $p_1$ is an end of a valid path}, $p_0\in\ZZ$ such that $p_0,p_1$ is valid, and $e_{p_0}=(a_0,b_0,x_0,y_0)$, then
\[
F_\shift(c,e)_{p_1}=(c_{p_1},(x_1,y_1,x_0,y_0))\qquad\text{and}\qquad F_\shiftt(c,e)_{p_1}=(c_{p_1},(y_1,x_1,x_0,y_0)).
\]
\noindent$\bullet$ \underline{If $p_1$ is both the beginning and the end of a valid path}, then
\[
F_\shift(c,e)_{p_1}=(c_{p_1},(x_1,y_1,a_1,b_1))\qquad\text{and}\qquad F_\shiftt(c,e)_{p_1}=(c_{p_1},(y_1,x_1,a_1,b_1)).
\]
All this is to say that $F_\shift$ and $F_\shiftt$ simulate $\shift$ and $\shiftt$ (resp.) on the valid paths. Notice that $F_\shiftt^2$ simulates $\shiftt^2$ on the valid paths, so it is natural to define $F_{\shiftt^2}=F_\shiftt^2$.

Suppose that $0<I(\mathcal{A},\delta)<\infty$. By the reasoning of \cite[Lemma 3.2., Lemma 3.3., Theorem 3.4.]{Meyerovitch08} we get that there is a confiugration $c\in A^{\ZZ^2}$ such that the entropy of $((A\times B)^{\ZZ^2},F_\shift)$ and $(A\times B)^{\ZZ^2},F_{\shiftt^2})$ is achieved even when restricting the $A$-layer to $c$. With this fixed background the entropy depends only on the one-dimensional cellular automata which are simulated on the valid paths. Then $F_\shift$ and $F_{\shiftt^2}$ cannot be (strongly) conjugate, since the latter has higher entropy.

Next suppose that $I(\mathcal{A},\delta)=0$, so that there can only be finite valid paths. Then by compactness we have a global bound $M\in\NN$ such that for any valid path $p_1,p_2,\dots,p_k$ holds that $k<M$. Of course there then also exists $m\in\NN$ such that any valid path fits inside a suitably positioned $\mathcal{C}_m$. We will define a strong conjugacy $H_\phi:(A\times B)^{\ZZ^2}\rightarrow (A\times B)^{\ZZ^2}$ of $((A\times B)^{\ZZ^2},F_\shift)$ and $((A\times B)^{\ZZ^2},F_{\shiftt^2})$ based on the map $\phi$ defined by (\ref{phi}) above. The local rule of $F_\phi$ has domain $(A\times B)^{[-m,m+n]^2}$ where $n$ is such that $\mathcal{A}\subseteq A^{\mathcal{C}_n}$. This domain guarantees that for any $c\in(A\times B)^{\ZZ^2}$ and $p\in \ZZ^2$ we can recognize the entire valid path that $p$ is part of. Let $(c,e)\in(A\times B)^{\ZZ^2}$. We define $H_\phi(c,e)_p$ for an arbitrary $p\in\ZZ^2$. If $p$ is not part of a valid path, then $H_\phi(c,e)_p=(c,e)_p$. Suppose $p$ is part of a valid path and let $p_1,p_2,\dots p_k$ be the valid path such that $p_1$ is the beginning of the path, $p_k$ the end of the path, and $p=p_i$ for some $i\in\{1,\dots, k\}$. As pointed out, the local neighborhood is large enough so that the local rule sees this entire valid path and can verify its validity on each position of the valid path. Denote $e_{p_j}=(a_j,b_j,x_j,y_j)$ for all $j\in\{1,\dots,k\}$. Now we define
\begin{align*}
H_\phi(c,e)_p = \Bigg(c_p,\bigg( \phi \left(
\begin{array}{cccccc}
	x_k & \cdots & x_1 & a_1 & \cdots & a_k \\
	y_k & \cdots & y_1 & b_1 & \cdots & b_k
\end{array} \right)_{k+i-1},\\
\phi \left(
\begin{array}{cccccc}
	x_k & \cdots & x_1 & a_1 & \cdots & a_k \\
	y_k & \cdots & y_1 & b_1 & \cdots & b_k
\end{array} \right)_{i-1}
\bigg)\Bigg).
\end{align*}
Since $\phi$ is a bijection on words of even length we get that $((A\times B)^{\ZZ^2},F_\shift)\scong ((A\times B)^{\ZZ^2},F_{\shiftt^2})$. Since $F_\shift$ and $F_{\shiftt^2}$ are now periodic, they have zero entropy. Overall we have seen the following.

\begin{theorem}
\label{inseparablePairs2d}
The following two sets of pairs of reversible two-dimensional cellular automata are recursively inseparable:
\begin{enumerate}[(i)]
\item pairs where the first cellular automaton has strictly higher entropy than the second one, and
\item pairs that are strongly conjugate and both have zero entropy.
\end{enumerate}
\end{theorem}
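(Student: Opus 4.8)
The plan is to reduce from Proposition~\ref{infinitepaths2}: given an acyclic orientation $(\mathcal{A},\delta)$ on $A^{\ZZ^2}$ with $I(\mathcal{A},\delta)<\infty$, it is undecidable whether $I(\mathcal{A},\delta)=0$. To such an orientation I associate the pair of reversible cellular automata $(F_\shift, F_{\shiftt^2})$ constructed above, mapping the set of instances with $I(\mathcal{A},\delta)=0$ into part~(ii) and the instances with $I(\mathcal{A},\delta)>0$ into part~(i). The map $(\mathcal{A},\delta)\mapsto (F_\shift,F_{\shiftt^2})$ is clearly computable: the local rules are given explicitly by the case analysis, and they are genuine local rules once the neighborhood is taken large enough to detect beginnings, middles and ends of valid paths (recall that each of these is a local property, and that finiteness of all paths gives a uniform bound $M$, hence a uniform neighborhood radius). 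Reversibility of $F_\shift$ and $F_\shiftt$ holds because on every valid path they simulate the bijections $\shift$ and $\shiftt$ on finite two-track words (the content is merely permuted around the glued loop / M\"obius band), and off the valid paths they act as the identity; hence $F_{\shiftt^2}=F_\shiftt^2$ is reversible as well.

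Next I verify the two implications already sketched in the text, tightening them into a proof. If $0<I(\mathcal{A},\delta)<\infty$, then by the reasoning of \cite[Lemma 3.2, Lemma 3.3, Theorem 3.4]{Meyerovitch08} there is a fixed background configuration $c\in A^{\ZZ^2}$ on which the supremum of infinitely many disjoint infinite paths is realized in the relevant sense, so that the entropies $h((A\times B)^{\ZZ^2},F_\shift)$ and $h((A\times B)^{\ZZ^2},F_{\shiftt^2})$ are computed from the one-dimensional systems simulated on those paths; since $\shiftt^2$ moves information twice as fast as $\shift$ along a bi-infinite path, $F_{\shiftt^2}$ has strictly larger entropy than $F_\shift$, placing the pair in set~(i) (and in particular they are not conjugate, by Proposition~\ref{entropy-of-subsystem-and-factor}). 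If instead $I(\mathcal{A},\delta)=0$, then all valid paths are finite with a global length bound $M$, so both $F_\shift$ and $F_{\shiftt^2}$ are periodic (each has a uniform period dividing a common multiple of the loop lengths $\le M$), hence have zero entropy; and the explicit map $H_\phi$ built from the bijection $\phi$ of~(\ref{phi}) is a strong homomorphism conjugating $F_\shift$ to $F_{\shiftt^2}$, so the pair lies in set~(ii).

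The routine but necessary checks are: (a) $H_\phi$ is a cellular automaton — it is continuous and shift-commuting because its value at $p$ depends only on the bounded valid path through $p$, which is read inside $(A\times B)^{[-m,m+n]^2}$; (b) $H_\phi$ is bijective — since $\phi$ is a bijection on even-length words and $H_\phi$ acts as the identity off valid paths, it has an obvious inverse of the same local form built from $\phi^{-1}$, so it is a conjugacy by \cite{Hedlund69}; (c) $H_\phi F_\shift = F_{\shiftt^2} H_\phi$ — this is the relation $\shift=\phi^{-1}\shiftt^2\phi$ on each finite even-length loop, which is exactly what $\phi$ in~(\ref{phi}) was designed to witness, transported through the path-tracing bookkeeping of the $(a_j,b_j,x_j,y_j)$ coordinates. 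The recursive inseparability conclusion is then formal: any recursive set separating (i) from (ii) would, pulled back along $(\mathcal{A},\delta)\mapsto(F_\shift,F_{\shiftt^2})$, decide whether $I(\mathcal{A},\delta)=0$, contradicting Proposition~\ref{infinitepaths2}.

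The main obstacle I anticipate is making precise that the valid-path bookkeeping in the definitions of $F_\shift$, $F_\shiftt$ and $H_\phi$ is consistent at the boundary cases — beginnings, ends, and single-cell paths that are simultaneously a beginning and an end — so that the simulated maps on finite loops are genuinely $\shift$ and $\shiftt$ and so that the even-length requirement for $\phi$ is automatically met (e.g.\ by the fact that gluing a length-$k$ path through $\shiftt$ produces a loop of length $2k$, which is why $H_\phi$ is always fed words of even length $2k$ and why it is $F_{\shiftt^2}=F_\shiftt^2$, not $F_\shiftt$, that is conjugate to $F_\shift$). Once this bookkeeping is pinned down, the entropy comparison and the conjugacy both follow from the corresponding elementary facts about the finite maps $\shift$, $\shiftt^2$ and $\phi$ together with the cited results of \cite{Kari94} and \cite{Meyerovitch08}.
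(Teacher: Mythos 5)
Your proposal is correct and follows essentially the same route as the paper: reduce from Proposition~\ref{infinitepaths2} via the pair $(F_\shift,F_{\shiftt^2})$, use the Meyerovitch-style entropy argument on a fixed background when infinite valid paths exist, and the strong conjugacy $H_\phi$ built from $\phi$ of~(\ref{phi}) when all valid paths are finite. The additional checks you flag (computability of the reduction, reversibility, the even-length bookkeeping at path ends) are exactly the points the paper treats implicitly, and your handling of them is sound.
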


The same way we got Corollary \ref{undecidableProperties} from Theorem \ref{inseparablePairs}, we now get the following corollary.

\begin{corollary}
\label{undecidableProperties2d}
Let $F,G:A^{\ZZ^2}\rightarrow A^{\ZZ^2}$ be two reversible cellular automata. Then the following hold:
\begin{enumerate}
\item It is undecidable whether $F$ and $G$ are (strongly) conjugate.
\item It is undecidable whether $F$ is a (strong) factor of $G$.
\item It is undecidable whether $F$ is a (strong) subsystem of $G$.
\end{enumerate}
\end{corollary}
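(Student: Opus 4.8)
The final statement to prove is Corollary~\ref{undecidableProperties2d}. The plan is to derive it from Theorem~\ref{inseparablePairs2d} in exactly the same manner that Corollary~\ref{undecidableProperties} was derived from Theorem~\ref{inseparablePairs}, using Proposition~\ref{entropy-of-subsystem-and-factor} to rule out the positive-entropy-gap cases. The key observation is that the two sets of pairs in Theorem~\ref{inseparablePairs2d} are disjoint recursively enumerable sets, and each of the three decision problems in the corollary, if decidable, would provide a recursive separation of these two sets, contradicting their recursive inseparability. Note also that the pairs in Theorem~\ref{inseparablePairs2d} consist of reversible cellular automata (by construction of $F_\shift$ and $F_{\shiftt^2}$), so the restriction to reversible $F,G$ in the corollary is exactly the setting provided by the theorem.

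For part~1, I would argue as follows. Suppose conjugacy (resp.\ strong conjugacy) of reversible two-dimensional cellular automata were decidable. A pair from set~(i) of Theorem~\ref{inseparablePairs2d} has the first automaton with strictly higher entropy than the second, so by Proposition~\ref{entropy-of-subsystem-and-factor} the two cannot be conjugate (entropy is a conjugacy invariant), hence also not strongly conjugate. A pair from set~(ii) is strongly conjugate, hence conjugate. Therefore the decision procedure for (strong) conjugacy would output ``no'' on all of set~(i) and ``yes'' on all of set~(ii), i.e.\ it would recursively separate the two sets — contradiction.

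For part~2, suppose being a (strong) factor were decidable. Given a pair $(F,G)$, run the procedure on both $(F,G)$ and $(G,F)$ and answer ``yes'' iff each of $F,G$ is a (strong) factor of the other. On a pair from set~(ii) both automata are strongly conjugate, hence each is a (strong) factor of the other, so the answer is ``yes''. On a pair from set~(i), one automaton has strictly larger entropy than the other, so by Proposition~\ref{entropy-of-subsystem-and-factor} it cannot be a (strong) factor of the other (a factor cannot increase entropy), so the answer is ``no''. Again this separates the two sets, a contradiction. Part~3 is identical, using that a subsystem also cannot have larger entropy than the ambient system (Proposition~\ref{entropy-of-subsystem-and-factor}): on set~(ii) each automaton embeds in the other via the strong conjugacy, while on set~(i) the higher-entropy automaton cannot be a (strong) subsystem of the lower-entropy one.

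There is no real obstacle here; the content is entirely in Theorem~\ref{inseparablePairs2d}, and the corollary is a routine packaging. The only point that deserves a word is that recursive inseparability of two sets $S_1, S_2$ means precisely that there is no recursive set $R$ with $S_1 \subseteq R$ and $S_2 \cap R = \emptyset$, so each of the three decidability assumptions must be shown to manufacture such an $R$ (the set of pairs on which the procedure answers ``yes'', or its complement, as appropriate); in each case the entropy inequality from Proposition~\ref{entropy-of-subsystem-and-factor} together with the conjugacy/factor/subsystem structure of the two sets in Theorem~\ref{inseparablePairs2d} supplies the required containments. One should also note the trivial implications strong conjugacy $\Rightarrow$ conjugacy, strong factor $\Rightarrow$ factor, strong subsystem $\Rightarrow$ subsystem, which is why the same pairs witness undecidability in both the strong and the weak variants simultaneously.
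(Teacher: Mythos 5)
Your proposal is correct and matches the paper's own argument, which is literally ``the same way we got Corollary \ref{undecidableProperties} from Theorem \ref{inseparablePairs}'': entropy (via Proposition \ref{entropy-of-subsystem-and-factor}) rules out conjugacy, factoring, and embedding for pairs in set (i), while strong conjugacy supplies all three relations for pairs in set (ii), so any of the three decision procedures would separate the recursively inseparable sets. The only superfluous remark is the claim that the two sets are recursively enumerable --- only their disjointness and the stated containments are needed.
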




\begin{remark}
Here is an alternative construction which uses one-sided reversible cellular automata, which allows using the construction and arguments of \cite{Meyerovitch08} more directly. Let $\zot:\{0,1,2\}^\NN\rightarrow\{0,1,2\}^\NN$ as in Section \ref{021example}, and $A^{\ZZ^2}$ a full shift with an orientation $(\mathcal{A},\delta)$. We will simulate $\zot$ on valid paths as we did $\shift$ and $\shiftt$. Since $\zot$ is one-sided, we do not need multiple tracks in this construction. More precisely $F_\zot:(A\times\{0,1,2\})^{\ZZ^2}\rightarrow (A\times\{0,1,2\})^{\ZZ^2}$ is defined by $F_\zot((c,e))_p=(c_p,\zot(e_pe_{p+\delta(c_{p})}))$ on beginnings and middle points of valid paths, and by identity on invalid positions. We still have to define $F_\zot$ on the ends of valid paths: the value at the end of a valid path will be permuted by $\rho_1$. In other words, $F_\zot$ uses the map $\rho$ from Proposition \ref{enumeration} on finite words.

Suppose there exists only finite valid paths. Then there is an upper bound $n\in\NN$ for the length of the valid paths. By Proposition \ref{enumeration} we know that on a finite path of length $k\leq n$ the cellular automaton $F_\zot$ enumerates all finite words in $\{0,1,2\}^k$. The same holds also for $F_\zot^2$ since $3^n$ is odd. Now we can define a conjugacy $\phi$ which fixes the $A^{\ZZ^2}$-layer and all letters on invalid paths on the $\{0,1,2\}^{\ZZ^2}$-layer, and, slightly informally, on valid paths of length $k$ we define $\phi:\{0,1,2\}^k\rightarrow \{0,1,2\}^k$ by setting $\phi(\rho^i(0^k))=\rho^{2i}(0^k)$  for all $i\in\{0,1,\dots,3^k-1\}$. This shows that $F_\zot$ and $F_\zot^2$ are conjugate.

Suppose there exists infinite valid paths. Then $F_\zot^2$ has larger entropy than $F_\zot$ since $\zot^2$ has higher entropy than $\zot$. This concludes the proof
\end{remark}

From this remark we get the following variant.

\begin{corollary}
The following sets of two-dimensional reversible cellular automata are recursively inseparable:
\begin{enumerate}[(i)]
\item cellular automata $F:A^{\ZZ^2}\rightarrow A^{\ZZ^2}$ such that $F^2$ has strictly higher entropy than $F$, and
\item cellular automata $F:A^{\ZZ^2}\rightarrow A^{\ZZ^2}$ such that $F\scong F^2$, and $h(A^{\ZZ^2},F)=0$.
\end{enumerate}
\end{corollary}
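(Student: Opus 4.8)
The plan is to derive this corollary directly from the construction in the Remark preceding it, which already does essentially all the work; the corollary is just a repackaging of that argument as a recursive inseparability statement. Concretely, I would take the acyclic orientation $(\mathcal{A},\delta)$ on a full shift $A^{\ZZ^2}$ with $I(\mathcal{A},\delta)<\infty$ supplied by Proposition \ref{infinitepaths2}, and associate to it the single reversible cellular automaton $F_\zot : (A\times\{0,1,2\})^{\ZZ^2}\to (A\times\{0,1,2\})^{\ZZ^2}$ defined in the Remark, which simulates $\zot$ on valid paths using the map $\rho$ of Proposition \ref{enumeration} on their finite segments and acts as the identity on invalid positions. This gives a computable map from instances of the problem ``is $I(\mathcal{A},\delta)=0$?'' to reversible two-dimensional cellular automata.

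The two implications to establish are exactly the two halves of the Remark. First, if $I(\mathcal{A},\delta)=0$, then by compactness there is a global bound $n$ on path lengths; Proposition \ref{enumeration} says $\rho$ restricted to $\{0,1,2\}^k$ is a cyclic permutation for every $k\le n$, and since $3^k$ is odd the squared map $\rho^2$ is also a cyclic permutation of the same order $3^k$. Hence on each valid path of length $k$ the conjugating map defined by $\phi(\rho^i(0^k))=\rho^{2i}(0^k)$ is a well-defined bijection, it commutes with the shift because it is applied path-locally (the local rule can see each entire path, since all paths fit in a bounded window), and it intertwines $F_\zot$ with $F_\zot^2$; so $F_\zot \scong F_\zot^2$. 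Moreover $F_\zot$ is then periodic, so $h(A^{\ZZ^2}\times\{0,1,2\}^{\ZZ^2},F_\zot)=0$, placing the instance in set (ii). Second, if $I(\mathcal{A},\delta)\neq 0$, there is a configuration with an infinite valid path, and by the entropy reasoning of \cite[Lemma 3.2, Lemma 3.3, Theorem 3.4]{Meyerovitch08} the entropy of $F_\zot$ is governed by the one-dimensional automaton simulated on the paths; since $h(\{0,1,2\}^\NN,\zot^2)=2h(\{0,1,2\}^\NN,\zot)=1>\tfrac12=h(\{0,1,2\}^\NN,\zot)$ by Proposition \ref{entropy of 012 CA}, we get $h(F_\zot^2)>h(F_\zot)$, placing the instance in set (i). The two sets are disjoint (by Proposition \ref{entropy-of-subsystem-and-factor} a conjugacy forces equal entropy, so (ii) excludes (i)), and a separating algorithm would decide whether $I(\mathcal{A},\delta)=0$, contradicting Proposition \ref{infinitepaths2}.

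The main obstacle, and the only place where genuine care is needed, is verifying that the claimed conjugacy $\phi$ on the $\{0,1,2\}^{\ZZ^2}$-layer is genuinely realized by a cellular automaton and is a bijection. The subtlety is that on a path of length $k$ one must know not just the cell contents but the position within the path and the path's total length in order to apply the correct power of $\rho$; this is why being a beginning, end, or middle of a valid path must be a local property (as established in the paragraph defining orientations) and why the local neighborhood must be large enough — of size governed by the global bound $M$ and by $n$ with $\mathcal{A}\subseteq A^{\mathcal{C}_n}$ — to see an entire path. One also needs that $\rho^2$ has the same cyclic order as $\rho$, which holds precisely because $\gcd(2,3^k)=1$; this is the role of the oddness of $3^k$ flagged in the Remark. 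Beyond these points the argument is a routine transcription of Theorem \ref{inseparablePairs2d}'s proof with $F_\shift, F_{\shiftt^2}$ replaced by $F_\zot, F_\zot^2$, so I would keep the write-up short and simply cite the Remark for the construction and \cite{Meyerovitch08} for the entropy estimate.
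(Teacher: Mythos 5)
Your proposal is correct and follows essentially the same route as the paper: the corollary is obtained directly from the preceding Remark's construction of $F_\zot$, with the case $I(\mathcal{A},\delta)=0$ handled by the conjugacy $\phi(\rho^i(0^k))=\rho^{2i}(0^k)$ (using that $\rho^2$ is cyclic of order $3^k$ because $3^k$ is odd) and the case $I(\mathcal{A},\delta)\neq 0$ handled by the entropy comparison $h(\zot^2)=2h(\zot)>h(\zot)$ via the arguments of \cite{Meyerovitch08}. The points you flag as needing care (locality of the path structure and the bounded window ensuring $\phi$ is a cellular automaton) are exactly the ones the paper addresses in the proof of Theorem \ref{inseparablePairs2d} and reuses here.
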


Notice that Epperlein's result \cite[Corollary 5.19.]{Epperlein17} says that conjugacy is undecidable even among two-periodic cellular automata, while all our results restrict only to reversible cellular automata. Strengthening the undecidability of strong conjugacy to periodic cellular automata seems plausible using the cellular automata from \cite[Example 7.6.]{Epperlein17} in our construction instead of $\shift$ and $\shiftt$. Example \cite[Example 7.6.]{Epperlein17} presents two one-dimensional cellular automata which are (temporally) periodic and conjugate on (spatially) periodic configurations but not conjugate in general, and thus not strongly conjugate either. Using these we would still have that if all valid paths are finite then the constructed cellular automata are conjugate. However the entropy argument does not work in the case that also infinite valid paths exist, since the entropy of a periodic cellular automaton is zero. It is not clear that even though the one-dimensional cellular automata simulated are not conjugate that the two-dimensional cellular automata could not be.

Lastly we note that in \cite[Question 6.1.]{Meyerovitch08} Meyerovitch asked whether for $d>1$ there exists an injective $d$-dimensional cellular automaton which has finite non-zero entropy. Either of the constructions given above explicitly gives a positive answer to this question for $d=2$. Simulating the one-dimensional cellular automata presented here on Meyerovitch's $d$-dimensional oriented full shifts, one also gets a positive answer for any larger $d$.

\begin{proposition}
For any $d\in\NN\setminus\{0\}$ there exists a reversible cellular automaton $F:A^{\ZZ^d}\rightarrow A^{\ZZ^d}$ such that $0<h(A^{\ZZ^d},F)<\infty$.
\end{proposition}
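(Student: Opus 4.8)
The plan is to adapt the two-dimensional construction of the preceding section---or equivalently the one-sided variant in the remark---to arbitrary dimension $d$, using Meyerovitch's $d$-dimensional oriented full shifts in place of the two-dimensional oriented full shift of \cite{Kari94}. First I would invoke \cite{Meyerovitch08} to obtain, for each $d$, an acyclic orientation $(\mathcal{A},\delta)$ on some full shift $A^{\ZZ^d}$ such that $0 < I(\mathcal{A},\delta) < \infty$; concretely, one wants a configuration carrying at least one infinite valid $\delta$-path but in which only a bounded number of pairwise disjoint infinite valid paths can coexist. For $d=2$ this is exactly the situation extracted from \cite{Kari94} in Proposition \ref{infinitepaths2}; for larger $d$ the corresponding oriented shifts are constructed in \cite{Meyerovitch08}.

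Second, on top of $A^{\ZZ^d}$ I would add a $\{0,1,2\}$-layer and define the cellular automaton $F_\zot : (A\times\{0,1,2\})^{\ZZ^d}\rightarrow (A\times\{0,1,2\})^{\ZZ^d}$ exactly as in the remark above: $F_\zot$ is the identity on the $A$-layer, simulates $\zot$ along each valid $\delta$-path on beginnings and middle points, and applies $\rho_1$ at the end of each valid path, so that along a finite valid path of length $k$ it realizes the map $\rho$ of Proposition \ref{enumeration}. By Proposition \ref{enumeration}, $\rho$ restricted to $\{0,1,2\}^k$ is a cyclic permutation, hence $F_\zot$ is reversible (its inverse is again a $d$-dimensional cellular automaton, with finite radius since a global bound on valid path lengths need not be assumed---local recognition of being a beginning/middle/end suffices, just as in \cite{Kari94} and \cite{Meyerovitch08}).

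Third, I would pin down the entropy. Since $I(\mathcal{A},\delta)>0$, there is a configuration $c\in A^{\ZZ^d}$ containing an infinite valid path, and along such a path $F_\zot$ acts as $\zot$ on $\{0,1,2\}^{\ZZ}$, which has entropy $\tfrac12>0$ by Proposition \ref{entropy of 012 CA}; this gives $h(A'^{\ZZ^d},F_\zot)\geq\tfrac12>0$ where $A'=A\times\{0,1,2\}$. For the upper bound I would use the argument of \cite[Section 4, Lemmas 3.2--3.4]{Meyerovitch08}: because $I(\mathcal{A},\delta)<\infty$, the $A$-layer contributes zero entropy (the oriented shift built in \cite{Kari94} has zero entropy) and only a bounded number of infinite valid paths can carry dynamics in any fixed window, each contributing a bounded entropy rate, so $h(A'^{\ZZ^d},F_\zot)<\infty$. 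Hence $0<h(A'^{\ZZ^d},F_\zot)<\infty$, as required.

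The main obstacle I anticipate is purely one of citation bookkeeping rather than new mathematics: one must check that Meyerovitch's higher-dimensional oriented full shifts genuinely satisfy $0<I(\mathcal{A},\delta)<\infty$ simultaneously (existence of at least one infinite valid path, together with a uniform bound on the number of disjoint ones), and that the finiteness-of-entropy estimates of \cite{Meyerovitch08} apply verbatim once $\zot$ is simulated along the paths in place of a shift. Everything else---reversibility of $F_\zot$ from Proposition \ref{enumeration}, the positive lower bound on entropy from Proposition \ref{entropy of 012 CA}---carries over from the $d=2$ case with no change, so the proof is essentially "simulate $\zot$ on Meyerovitch's $d$-dimensional oriented shifts and quote the entropy bounds."
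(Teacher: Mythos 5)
Your proposal is correct and matches the paper's own (very brief) argument: the paper likewise disposes of this proposition by noting that the $d=2$ constructions already answer Meyerovitch's question, and that simulating the same one-dimensional automata on Meyerovitch's $d$-dimensional oriented full shifts handles larger $d$. The only detail you might add is that $d=1$ is trivial (e.g.\ the shift map, or $\zot$ itself, already has finite positive entropy), but otherwise your citation-bookkeeping concerns are exactly the ones the paper leaves implicit.
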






\section{Conclusion}

We proved that the following sets of pairs of cellular automata are recursively inseparable
\begin{enumerate}[(i)]
\item cellular automata $F,G:A^\MM\rightarrow A^\MM$ such that $F$ has strictly higher entropy than $G$ , and
\item cellular automata $F,G:A^\MM\rightarrow A^\MM$ such that $(A^\MM,F)\scong (A^\MM,G)$, and both have zero entropy.
\end{enumerate}
This implies that the decision problems ``are (strongly) conjugate'', ``is a (strong) subsystem of'' and ``is a (strong) factor of'' are undecidable for one-dimensional one- and two-sided cellular automata. For two-dimensional two-sided cellular automata we also proved the same inseparability when restricted to reversible cellular automata. Naturally one is led to the following problem:

\begin{question}
Is (strong) conjugacy undecidable for reversible two-sided one-dimensional cellular automata? What about for reversible one-sided one-di\-men\-si\-o\-nal cellular automata?
\end{question}

In the reversible case one- and two-sided variants seem more distant from each other. In the two-sided reversible case it is known for example that periodicity and left-expansivity are undecidable propreties (\cite{KariOllinger08} and \cite{KariLukkarila09} resp.). These existing undecidability provide possible replacements of the nilpotency problem used in our construction. However there is a lot less to work with in the reversible one-sided case; so far the only undecidability result known is that if one is given a reversible cellular automaton $(A^\NN,F)$ and a configuration $x\in A^\NN$ with a simple description, then it is undecidable whether $x$ is $F$-periodic or not \cite{DelacourtOllinger17}. Since this result considers a given configuration rather than the dynamics of $F$, it seems unfit for the conjugacy problem.

Here we considered problems in a topological dynamical setting. One can also consider algebraic variants of these decision problems by fixing the underlying alphabet as cellular automata over a fixed alphabet form a monoid and reversible cellular automata over a fixed alphabet form a group. This additional restriction can indeed be added, but it does require some small additional considerations. For example, the decision problems used in our reductions need to be replaced with fixed size alphabet variants (see \cite[Proposition 2.4.]{DurandFormentiVarouchas03} for fixed alphabet variant of the undecidability of nilpotency).

Lastly we note that according to \cite[Theorem 5.18.]{Epperlein17} conjugacy remains undecidable for reversible one-sided two-dimensional cellular automata, that is cellular automata $F:A^{\NN^2}\rightarrow A^{\NN^2}$. Looking for a strong variant we face the same difficulties as for one-sided one-dimensional cellular automata in the lack of existing undecidability results. Our proof that strong conjugacy is undecidable for reversible two-dimensional cellular automata relied on the construction used to prove that reversibility is undecidable. In many cases questions about one-sided cellular automata can directly be answered using the known results about two-sided cellular automata, since two-sided cellular automaton composed with suitable shift map gives a one-sided cellular automaton, and many properties are preserved this way. This is why it is easy to see that surjectivity remains undecidable for one-sided two-dimensional cellular automata. However this cannot be used with reversibility, since not all reversible two-sided cellular automata can be shifted to give a \emph{reversible} one-sided cellular automaton. 

\begin{question}
Is it decidable whether a given two-dimensional one-sided cellular automaton $F:A^{\NN^2}\rightarrow A^{\NN^2}$ is  reversible or not?
\end{question}

\bibliographystyle{plain}
\bibliography{conjbib}{}

\end{document}